\theoremstyle{plain}
\newtheorem{thm}{Theorem}[section] 
\newtheorem{cor}[thm]{Corollary}
\newtheorem{conj}[thm]{Conjecture}
\newtheorem{propdef}[thm]{Proposition-Definition} 
\newtheorem{lem}[thm]{Lemma}
\newtheorem{mainthm}{Theorem}
\newtheorem{maincor}{Corollary}
\theoremstyle{definition} 
\newtheorem{defn}[thm]{Definition}
\newtheorem{eg}[thm]{Example} 
\theoremstyle{remark}
\newtheorem{rem}[thm]{Remark}
\newtheorem*{cl}{Claim}
\newtheorem*{acknowledgement}{Acknowledgments}
\newcommand{\sO}{\mathcal{O}}
\newcommand{\J}{\mathcal{J}} 
\newcommand{\Z}{\mathbb{Z}}
\newcommand{\N}{\mathbb{N}} 
\newcommand{\Q}{\mathbb{Q}} 
\newcommand{\R}{\mathbb{R}} 
\newcommand{\C}{\mathbb{C}}
\newcommand{\F}{\mathbb{F}}
\newcommand{\ba}{\mathfrak{a}}
\newcommand{\m}{\mathfrak{m}}
\newcommand{\adj}{\mathop{\mathrm{adj}}\nolimits}
\newcommand{\Hom}{\mathop{\mathrm{Hom}}\nolimits}
\newcommand{\Spec}{\mathop{\mathrm{Spec}}\nolimits}
\newcommand{\Proj}{\mathop{\mathrm{Proj}}\nolimits}
\newcommand{\codim}{\mathop{\mathrm{codim}}\nolimits}
\newcommand{\lct}{\mathop{\mathrm{lct}}\nolimits}
\newcommand{\fpt}{\mathop{\mathrm{fpt}}\nolimits}
\newcommand{\bigzerol}{\smash{\lower1.0ex\hbox{\bg 0}}}
\renewcommand{\labelenumi}{\rm{(\theenumi)}}
\newfont{\bg}{cmr17 scaled\magstep5}
\title{Adjoint ideals and a correspondence between\\ log canonicity and $F$-purity}
\author{Shunsuke Takagi}
\address{Graduate School of Mathematical Sciences, University of Tokyo, 3-8-1 Komaba, Meguro-ku, Tokyo 153-8914, JAPAN}
\email{stakagi@ms.u-tokyo.ac.jp}
\keywords{adjoint ideals, test ideals, $F$-pure singularities, log canonical singularities}
\subjclass[2010]{Primary 13A35; Secondary 14B05, 14F18}
\dedicatory{Dedicated to Professor~Shihoko Ishii on the~occasion of her~sixtieth~birthday.}
\begin{document}

\begin{abstract}
This paper presents three results on $F$-singularities.
First, we give a new proof of Eisenstein's restriction theorem for adjoint ideal sheaves, using the theory of $F$-singularities. 
Second,  we show that a conjecture of Musta\c{t}\u{a} and Srinivas \cite[Conjecture 1.1]{MS} implies a conjectural correspondence of $F$-purity and log canonicity.  
Finally, we prove this correspondence when the defining equations of the variety are very general.  
\end{abstract}

\maketitle
\markboth{SHUNSUKE TAKAGI}{A CORRESPONDENCE BETWEEN LOG CANONICITY AND $F$-PURITY}

%%%%%%%%%%%%%%%%%%%%%%%%%%%%%%%%%%%%%%%%%%%%%%%%%%%%%%%%%%%%%%%%%%%%%%%%%%%%%%%%%%%%%%%%%%%%%%%%%%%%%%%%

\section*{Introduction}
This paper deals with the theory of $F$-singularities, which are singularities defined using the Frobenius morphism in positive characteristic. 
We present three main results. 
First, we give a new proof of a restriction theorem for adjoint ideal sheaves. 
Second, we show that a certain arithmetic conjecture implies a conjectural correspondence of $F$-purity and log canonicity.  
Finally, we prove this correspondence when the defining equations of the variety are very general.   

The notion of the adjoint ideal sheaf along a normal $\Q$-Gorenstein closed subvariety $X$ of a smooth complex variety $A$ with codimension $c$ was introduced in \cite{Ta2}  (see Definition \ref{adjoint ideal} for its definition). 
It is a modification of the multiplier ideal sheaf associated to the pair $(A, cX)$ and encodes much information on the singularities of $X$. 
Eisenstein \cite{Ei} recently proved a restriction theorem for these adjoint ideal sheaves. 
In this paper, we give a new proof of his result, using the theory of $F$-singularities. 

Building on earlier results \cite{HY}, \cite{Ta5} and \cite{Ta4}, the author 
introduced in \cite{Ta2} a positive characteristic analogue of the adjoint ideal sheaf, called the test ideal sheaf (see Definition \ref{test ideal def}). 
He conjectured that the adjoint ideal sheaf coincides, after reduction to characteristic $p \gg 0$, with the test ideal sheaf, and some partial results were obtained in \textit{loc.~cit}. 
Making use of these results, we reduce the problem to an ideal theoretic problem on a normal $\Q$-Gorenstein ring essentially of finite type over a perfect field of characteristic $p>0$. 
The desired restriction formula is then obtained by adapting the argument of \cite{Sc2} (which can be traced back to \cite{Fe}) to our setting (see Theorem \ref{restriction}). 
As a corollary, we show the correspondence between adjoint ideal sheaves and test ideal sheaves in a full generality:

\renewcommand{\themaincor}{3.4}
\begin{maincor}
Let $t > 0$ be a real number and $Z$ be a proper closed subscheme of $A$ which does not contain $X$ in the support.
We denote by $\adj_X(A,tZ)$ the adjoint ideal sheaf of the pair $(A,tZ)$ along $X$, and let $(A_p, X_p, Z_p,  \adj_X(A,tZ)_p)$be a reduction modulo $p \gg 0$ of $(A, X, Z, \adj_X(A,tZ))$. 
If $\widetilde{\tau}_{X_p}(A_p,tZ_p)$ is the test ideal sheaf of $(A_p, tZ_p)$ along $X_p$, then 
$$\widetilde{\tau}_{X_p}(A_p,tZ_p)=\adj_X(A,tZ)_p.$$
\end{maincor}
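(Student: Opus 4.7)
The plan is to deduce the corollary by combining Theorem \ref{restriction} with partial results on the correspondence from \cite{Ta2} in a Nakayama-style argument. The strategy is to compare the two ideals in two separate regions of $A_p$: away from $X_p$, where both reduce to well-understood non-adjoint objects, and along $X_p$, where the two restriction theorems force their images in $\mathcal{O}_{X_p}$ to agree. These two pieces of information are then merged via Nakayama's lemma.

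Concretely, I would first reduce to a local affine situation and invoke the containment $\widetilde{\tau}_{X_p}(A_p, tZ_p) \subseteq \adj_X(A,tZ)_p$ established for $p \gg 0$ in \cite{Ta2}. To get the reverse inclusion, form the coherent quotient sheaf
$$K := \adj_X(A,tZ)_p \,/\, \widetilde{\tau}_{X_p}(A_p, tZ_p).$$
Off $X_p$, the adjoint ideal specializes to $\J(A, tZ)$ and the test ideal sheaf to $\tau(A_p, tZ_p)$ (since the ``codimension-$c$'' correction disappears when $X$ is absent); by the Hara--Yoshida correspondence between multiplier ideals and test ideals in characteristic $p \gg 0$, these coincide, so $K$ is supported on $X_p$. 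Next, one checks $K \otimes \mathcal{O}_{X_p} = 0$: Eisenstein's adjunction formula for the adjoint ideal gives $\adj_X(A, tZ)\cdot \mathcal{O}_X = \J(X, tZ|_X)$, while Theorem \ref{restriction} of this paper gives the parallel formula $\widetilde{\tau}_{X_p}(A_p, tZ_p)\cdot \mathcal{O}_{X_p} = \tau(X_p, tZ_p|_{X_p})$, and these two restrictions are identified via Hara--Yoshida applied on $X$. Thus $K = I_{X_p} \cdot K$, and Nakayama's lemma at each stalk along $X_p$ forces $K=0$.

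The main obstacle in this plan is verifying that the various reduction-mod-$p$ operations commute with restriction to $X$ in a uniform way for $p \gg 0$. Eisenstein's restriction theorem lives in characteristic zero, Theorem \ref{restriction} lives in characteristic $p$, and Hara--Yoshida bridges the two, but each identification requires a log resolution or model that behaves well after spreading out. The delicate point is to choose a single model of $(A, X, Z)$ over a finitely generated $\Z$-algebra $B$ such that, simultaneously, (i) a log resolution of $(A, X+Z)$ reduces well, (ii) the adjunction/restriction compatibility for $\adj_X(A,tZ)$ survives base change, and (iii) the test-ideal restriction in Theorem \ref{restriction} applies to the reduction. Once this compatible spread-out is fixed, the argument sketched above runs cleanly, since every sheaf in sight is coherent and the Nakayama step is entirely formal.
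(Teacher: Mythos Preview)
Your overall two-step strategy---one containment from \cite{Ta2}, then handling the reverse containment by looking ``along $X_p$'' and ``away from $X_p$''---matches the paper's, but the Nakayama step contains a genuine gap. Set $J_1=\widetilde{\tau}_{X_p}(A_p,tZ_p)\subseteq J_2=\adj_X(A,tZ)_p$ and $K=J_2/J_1$. You assert that equality of the images $J_1\cdot\sO_{X_p}=J_2\cdot\sO_{X_p}$ forces $K\otimes\sO_{X_p}=0$. It does not: the former only says $J_2\subseteq J_1+\mathcal{I}_{X_p}$, whereas $K\otimes\sO_{X_p}=J_2/(J_1+\mathcal{I}_{X_p}J_2)$, so you would need $J_2\subseteq J_1+\mathcal{I}_{X_p}J_2$. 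Already in $k[x]$ with $\mathcal{I}_X=(x)$, $J_1=(x^2)$, $J_2=(x)$, both ideals have image $0$ in $\sO_X=k$ and $K=(x)/(x^2)$ is set-theoretically supported on $X$, yet $K\otimes\sO_X=K\neq 0$ and Nakayama is useless. So neither your ``off $X_p$'' support statement nor the restriction equality, separately or together, kills $K$.

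What is actually needed is the \emph{scheme-theoretic} statement $J_2\cap\mathcal{I}_{X_p}\subseteq J_1$, and the paper obtains this not from support considerations but from Eisenstein's short exact sequence \cite[Theorem~5.1]{Ei}, which identifies $\adj_X(A,Z)\cap\mathcal{I}_X=\J(A,cX+Z)$. Reducing mod $p$ and applying Hara--Yoshida on $A$ gives $J_2\cap\mathcal{I}_{X_p}=\widetilde{\tau}(A_p,cX_p+tZ_p)\subseteq J_1$. Together with $J_2\subseteq J_1+\mathcal{I}_{X_p}$ (from the restriction step) this yields $J_2\subseteq J_1$ by the elementary observation that any $a\in J_2$ decomposes as $b+c$ with $b\in J_1$ and $c=a-b\in J_2\cap\mathcal{I}_{X_p}\subseteq J_1$; no Nakayama is invoked. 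Two minor points: the restriction formula carries an l.c.i.-defect term, $\adj_X(A,tZ)\big|_X=\J(X,tZ|_X+\tfrac{1}{r}V(J_X))$, which you dropped; and Theorem~\ref{restriction} is the characteristic-zero statement---the positive-characteristic inclusion you want is the intermediate step $(\star\star)$ inside its proof.
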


The other ingredients of this paper are on a correspondence between $F$-pure singularities and log canonical singularities. 
$F$-pure singularities are defined via splitting of Frobenius morphisms (see Definition \ref{F-sing}). 
Log canonical singularities form a class of singularities associated to the minimal model program  (see Definition \ref{sing of pairs}). 
It is known that the pair $(X; tZ)$ is log canonical if its modulo $p$ reduction $(X_p; t Z_p)$ is $F$-pure for infinitely many primes $p$, and the converse is conjectural (see Conjecture \ref{lc conj} for the precise statement). 
This conjecture is widely open and only a few special cases are known. 
On the other hand, Musta\c{t}\u{a}--Srinivas \cite[Conjecture 1.1]{MS} proposed the following more arithmetic conjecture to study a behavior of test ideal sheaves: if $V$ is a $d$-dimensional smooth projective variety over an algebraically closed field of characteristic zero, then the action induced by the Frobenius morphism on the cohomology group $H^d(V_p, \sO_{V_p})$ of its modulo $p$ reduction $V_p$ is bijective for infinitely many primes $p$. 
In this paper, we show that their conjecture implies the correspondence of $F$-purity and log canonicity (see Theorem \ref{implication}). 
Our result can be viewed as strong evidence in favor of this conjectural correspondence, although the conjecture of Musta\c{t}\u{a}--Srinivas is also largely open.

As additional evidence of this correspondence, we consider the case when the defining equations of $X$ are very general.  
Shibuta and the author \cite{ST} proved the correspondence if $X=\mathbb{C}^n$ and $Z$ is a complete intersection binomial subscheme or a space monomial curve. 
Using a similar idea, Hern\'andez \cite{He} recently proved the case 
when $X=\mathbb{C}^n$ and $Z$ is a hypersurface of $X$ such that the coefficients of terms of its defining equation are algebraically independent over $\Q$. 
Using the techniques we have developed for Theorem \ref{restriction}, 
we generalize his result as follows: 
\renewcommand{\themainthm}{4.1}
\begin{mainthm}
Let $X \subseteq \mathbb{C}^n=\Spec \C[x_1, \dots, x_n]$ be a normal $\Q$-Gorenstein closed subvariety of codimension $c$ passing through the origin $0$. 
Let $r$ denote the Gorenstein index of $X$ and $J_X$ denote the l.c.i.~defect ideal of $X$.
Let $\ba \subseteq \sO_X$ be a nonzero ideal and $t >0$ be a real number. 
Suppose that there exist a system of generators $h_1, \dots, h_l$ for the defining ideal $\mathcal{I}_X$ of $X$  and a system of generators $h_{l+1}, \dots, h_{\nu}$ for $\ba$ with the following property: for each $i=1, \dots, \nu$, we can write 
$$h_i=\sum_{j=1}^{\rho_i}\gamma_{ij}x_1^{\alpha^{(1)}_{ij}} \cdots x_n^{\alpha^{(n)}_{ij}} \in \C[x_1, \dots, x_n]  
\; \left((\alpha^{(1)}_{ij}, \dots, \alpha^{(n)}_{ij}) \in \Z_{\ge 0}^n \setminus \{\mathbf{0}\}, \; \gamma_{ij} \in \C^* \right),$$
where $\gamma_{i1}, \dots, \gamma_{i\rho_i}$ are algebraically independent over $\Q$. 
Then $(X; t V(\ba)+\frac{1}{r} V(J_X))$ is log canonical at $0$ if and only if its modulo $p$ reduction is $F$-pure at $0$ for infinitely many primes $p$.  
\end{mainthm}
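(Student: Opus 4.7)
The plan is to establish the two directions separately. The direction ``$F$-pure mod $p$ for infinitely many $p$ implies log canonical'' is the general, unconditional half of the conjectural correspondence and requires nothing about the $\gamma_{ij}$, so it suffices to invoke it; the content of the theorem is the reverse implication, for which the strategy is as follows.

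First, I would reformulate $F$-purity at $0$ of the mod $p$ reduction of $(X;\,tV(\ba) + \frac{1}{r}V(J_X))$ as the non-membership of a concrete element in a Frobenius power of the maximal ideal of $\C[x_1,\dots,x_n]$. Adapting the Fedder-type criterion already used in the proof of the restriction theorem to the $\Q$-Gorenstein pair setting, $F$-purity at $0$ should be equivalent to the existence, for the given prime $p$, of exponents $a_{l+1},\dots,a_\nu\in\Z_{\ge 0}$ with $\sum_{i>l} a_i = \lceil t(p-1)\rceil$ and a choice of local trivialization for $\omega_X^{(r)}$, such that
$$(h_1\cdots h_l)^{p-1}\,h_{l+1}^{a_{l+1}}\cdots h_\nu^{a_\nu}\cdot \omega_p\;\notin\;\m_0^{[p]},$$
where $\omega_p$ is a polynomial encoding the contribution of $\frac{1}{r}V(J_X)$ through the l.c.i.\ defect ideal $J_X$, and $\m_0=(x_1,\dots,x_n)$. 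In parallel, using Corollary 3.4 together with the standard characterization of log canonicity via triviality of the adjoint ideal, log canonicity at $0$ should be encoded as a parallel non-vanishing statement for a universal polynomial obtained by working over the subring $\Q[\gamma_{ij}][x_1,\dots,x_n]$.

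Second, I would exploit the algebraic independence of the $\gamma_{ij}$ over $\Q$ in a decisive way. Expanding the product $(h_1\cdots h_l)^{p-1}\prod_i h_i^{a_i}\cdot \omega_p$ via the multinomial theorem, the coefficient of any monomial $x^\beta$ is a polynomial expression in the $\gamma_{ij}$ with rational coefficients; since distinct admissible choices of multi-indices contribute distinct monomials in the $\gamma_{ij}$, no cancellation can occur between them, and this expression is nonzero in $\C$ precisely when the underlying combinatorial exponent system admits at least one solution. A characteristic-zero witness provided by log canonicity at $0$ therefore corresponds, combinatorially, to a $\Q$-polynomial expression in the $\gamma_{ij}$ with only finitely many prime denominators, and for every prime $p$ outside this finite exceptional set the reduction of this expression remains nonzero in characteristic $p$.

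Third, to upgrade the witness to a genuine $F$-purity statement for infinitely many primes $p$, I would let the exponents $a_i$ scale with $p$ through a congruence condition. Writing $a_i = \lfloor t_i (p-1)\rfloor$ for rationals $t_i\ge 0$ with $\sum_{i>l} t_i = t$ extracted from the log canonicity witness, the rounding errors vanish simultaneously for primes $p$ satisfying $p \equiv 1 \pmod{N}$, where $N$ is a common denominator of the $t_i$; Dirichlet's theorem on primes in arithmetic progressions then supplies infinitely many such primes, yielding the desired conclusion. The main obstacle I expect is the first step: setting up the Fedder-type criterion rigorously for a pair whose ambient variety $X$ is neither smooth nor locally a complete intersection requires careful tracking of the boundary contribution $\frac{1}{r}V(J_X)$, which amounts to identifying the correct $\Q$-Gorenstein trace in the category of polynomial algebras so that the resulting non-membership condition can be attacked by the combinatorial argument of the second step, in a manner directly parallel to Hern\'andez's hypersurface argument \cite{He}.
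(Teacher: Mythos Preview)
Your overall strategy---Fedder-type criterion, multinomial expansion with the algebraic-independence hypothesis killing cancellation, then Dirichlet to produce infinitely many good primes---is the same as the paper's, and your identification of the first step as the main obstacle is accurate. But your proposed Fedder element $(h_1\cdots h_l)^{p-1}$ is simply wrong when $X$ is not a complete intersection: for $l>c$ this product does not represent $(\mathcal{I}_X^{[p]}:\mathcal{I}_X)$ modulo $\mathcal{I}_X^{[p]}$, and there is no way to repair this by multiplying by an auxiliary polynomial $\omega_p$. The paper's key move is to replace the $h_1,\dots,h_l$ by \emph{general} linear combinations $f_1,\dots,f_c$ (so $Y=V(f_1,\dots,f_c)$ is a complete intersection containing $X$, and the coefficients of the $f_i$ remain algebraically independent over $\Q$), and then invoke the Claim established inside the proof of Theorem~\ref{restriction}: $g^{(q-1)/r}(\mathcal{I}_X^{[q]}:\mathcal{I}_X)=(f_1\cdots f_c)^{q-1}$ in $\sO_{A}/\mathcal{I}_X^{[q]}$, where $g$ cuts out $rD^Y$. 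This identity is exactly what converts the intractable colon ideal into a product of $c$ polynomials, and the factor $g^{(q-1)/r}$ is precisely the contribution of $\frac{1}{r}V(J_X)$ you were looking for.

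There is a second gap in your step two: you assert that log canonicity of $(X;\,tV(\ba)+\frac{1}{r}V(J_X))$ at $0$ produces a ``characteristic-zero witness'' in the form of rationals $t_i$, but you give no mechanism for this. The paper first applies inversion of adjunction (Kawakita/Ein--Musta\c{t}\u{a}) to pass from log canonicity on $X$ to log canonicity of $(\mathbb{A}^n;\,cX+tV(\ba))$ at $0$, hence of $(\mathbb{A}^n;\,\sum_{i=1}^c\mathrm{div}(f_i)+tV(f_{c+1},\dots,f_s))$; then the summation formula for multiplier ideals together with Howald's theorem for the term ideals $\ba_{f_i}$ translates this into a Newton-polytope statement, namely the existence of a rational vector $\sigma=(\sigma_{ij})$ with $\widetilde{A}\sigma^{\mathrm{T}}\le\mathbf{1}$, $\sum_j\sigma_{ij}=1$ for $i\le c$, and $\sum_{i>c}\sum_j\sigma_{ij}=t$. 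The constraint $\sum_j\sigma_{ij}(p-1)\le p-1$ is what guarantees the relevant multinomial coefficient is a unit mod $p$, so the nonvanishing is not merely about avoiding finitely many prime denominators as you suggest. With these two ingredients in place, your steps three and four go through essentially as written.
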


\begin{small}
\begin{acknowledgement}
The author is grateful to Daniel Hern\'andez, Nobuo Hara, Junmyeong Jang, Masayuki Kawakita, Karl Schwede and Takafumi Shibuta for valuable conversations. 
He is indebted to Natsuo Saito for his help with LaTeX. 
He also would like to thank Shihoko Ishii and anonymous referee for pointing out mistakes in a previous version of this paper. 
The author would like to express his gratitude to the Massachusetts Institute of Technology, where a part of this work was done, for their hospitality during the winter of 2010--2011. The author was partially supported by Grant-in-Aid for Young Scientists (B) 20740019 and 23740024 from JSPS and by Program for Improvement of Research Environment for Young Researchers from SCF commissioned by MEXT of Japan. 
\end{acknowledgement}
\end{small}

%%%%%%%%%%%%%%%%%%%%%%%%%%%%%%%%%%%%%%%%%%%%%%%%%%%%%%%%%%%%%%%%%%%%%%%%%%%%%%%%%%%%%%%%%%%%%%%%%%%%%%

\section{Preliminaries}
\subsection{Test ideals and $F$-singularities of pairs}
In this subsection, we briefly review the definitions of test ideal sheaves and $F$-singularities of pairs. 
The reader is referred to \cite{Sc}, \cite{Sc2}, \cite{Sc3}, \cite{Ta3} and \cite{Ta2} for the details.

Throughout this paper, all schemes are Noetherian, excellent and separated, and all sheaves are coherent. 
Let $A$ be an integral scheme of prime characteristic $p$. 
For each integer $e \ge 1$,  we denote by $F^e: A \to A$ or $F^e: \sO_A \to F^e_*\sO_A$ the $e$-th iteration of the absolute Frobenius morphism on $A$. 
We say that $A$ is \textit{$F$-finite} if $F: A \to A$ is a finite morphism. 
For example, every scheme essentially of finite type over a perfect field is $F$-finite. 
Given an ideal sheaf $I \subseteq \sO_A$, for each $q=p^e$, we denote by $I^{[q]} \subseteq \sO_A$ the ideal sheaf identified with $I \cdot F^e_*\sO_A$ via the identification $F^e_*\sO_A \cong \sO_A$. 
For a closed subscheme $Y$ of $A$, we denote by $\mathcal{I}_Y$ the defining ideal sheaf of $Y$ in $X$.

The notion of test ideal sheaves along arbitrary subvarieties was introduced in \cite{Ta2}. 
Below we give an alternate description of these sheaves based upon the ideas of \cite{Sc2}.
Let $A$ be a normal $\Q$-Gorenstein variety over an $F$-finite field of characteristic $p>0$ 
and $X \subseteq A$ be a reduced equidimensional closed subscheme of codimension $c$. 
Suppose that the Gorenstein index of $A$ is not divisible by $p$. 
There then exists infinitely many $e$ such that $(p^e-1)K_A$ is Cartier, and we fix such an integer $e_0 \ge 1$.  
Grothendieck duality yields an isomorphism of $F^{e_0}_*\sO_A$-modules
$$F^{e_0}_*\sO_A \cong \mathscr{H}\mathrm{om}_{\sO_A}(F^{e_0}_*((1-p^{e_0})K_A), \sO_A),$$
and we denote by 
$$\varphi_{A, e_0}:F^{e_0}_*\sO_A((1-p^{e_0})K_A) \to \sO_A$$
the map corresponding to the global section $1$ of $\sO_A$ via this isomorphism. 
When $A$ is Gorenstein, we can describe $\varphi_{A, e_0}$ more explicitly: it is obtained by tensoring the canonical dual $(F^{e_0})^{\vee}: F^{e_0}_*\omega_A \to \omega_A$ of the $e_0$-times iterated Frobenius morphism $F^{e_0}:\sO_A \to F^{e_0}_*\sO_A$ with $\sO_A(-K_A)$. 
Also, the composite map 
$$\varphi_{A, e_0} \circ F^{e_0}_*\varphi_{A, e_0} \circ \cdots \circ F^{(n-1)e_0}_*\varphi_{A, e_0}:F^{ne_0}_*\sO_A((1-p^{ne_0})K_A) \to \sO_A.$$
is denoted by $\varphi_{A, ne_0}$ for all integers $n \ge 1$. 
Just for convenience, $\varphi_{A, 0}$ is defined to be the identity map $\sO_A \to \sO_A$. 

\begin{propdef}[\textup{cf.~\cite[Definition 2.2]{Ta2}}]\label{test ideal def}
Let the notation be as above and let $Z=\sum_{i=1}^m t_i Z_i$ be a  formal combination where the $t_i$ are nonnegative real numbers and the $Z_i$ are proper closed subschemes of $A$ which do not contain any component of $X$ in their support.
\begin{enumerate}
\item 
There exists a unique smallest ideal sheaf $J \subseteq \sO_A$ whose support does not contain any component of $X$ and which satisfies  
$$\qquad \quad \varphi_{A,ne_0}(F^{ne_0}_*(J\mathcal{I}_X^{c(p^{ne_0}-1)}\mathcal{I}_{Z_1}^{\lceil t_1(p^{ne_0}-1) \rceil} \cdots \mathcal{I}_{Z_m}^{\lceil t_m(p^{ne_0}-1) \rceil} \sO_A((1-p^{ne_0})K_A))) \subseteq J$$ 
for all integers $n \ge 1$. 
This ideal sheaf is denoted by $\widetilde{\tau}_X(A,Z)$. 
When $X=\emptyset$ $($resp.~$Z=\emptyset)$, we denote this ideal sheaf simply by $\widetilde{\tau}(A; Z)$ $($resp.~$\widetilde{\tau}_X(A))$.
\item
$(A, Z)$ is said to be \textit{purely $F$-regular} along $X$ if $\widetilde{\tau}_X(A,Z)=\sO_{A}$. 
\end{enumerate}
\end{propdef}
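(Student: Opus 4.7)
The plan is to realize $\widetilde{\tau}_X(A,Z)$ as the smallest compatible ideal containing a suitable ``test element,'' following the general strategy of \cite{HY,Ta2,Sc2}. Since the property is local on $A$, I reduce to the affine case $A=\Spec R$. For each $n\ge 1$, set $\mathfrak{I}_n:=\mathcal{I}_X^{c(p^{ne_0}-1)}\prod_i \mathcal{I}_{Z_i}^{\lceil t_i(p^{ne_0}-1)\rceil}$, and for any ideal $J\subseteq R$ define
$$\Phi_n(J):=\varphi_{A,ne_0}\bigl(F^{ne_0}_*(J\cdot \mathfrak{I}_n\cdot \sO_A((1-p^{ne_0})K_A))\bigr)\subseteq R.$$
The required compatibility then reads $\Phi_n(J)\subseteq J$ for all $n\ge 1$; call the collection of ideals satisfying it $\mathcal{F}$, and let $\mathcal{S}\subseteq \mathcal{F}$ be the subcollection of those ideals not contained in any minimal prime of $\mathcal{I}_X$.

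First, I would observe that each $\Phi_n$ is additive in $J$, so $\mathcal{F}$ is closed under arbitrary sums. Consequently, for any ideal $J_0\subseteq R$, Noetherianity applied to the ascending chain $J_0\subseteq J_0+\sum_n \Phi_n(J_0)\subseteq \cdots$ yields a smallest compatible ideal $\overline{J_0}\in \mathcal{F}$ containing $J_0$, and an easy induction shows that every $J\in \mathcal{F}$ with $J\supseteq J_0$ already contains $\overline{J_0}$. This reduces the task to finding a single $J_0$ with $J_0\in \mathcal{S}$ and $J_0\subseteq J$ for every $J\in \mathcal{S}$.

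The heart of the argument, and the main obstacle, is producing a nonzero ``test element'' $c\in R$ lying outside every minimal prime of $\mathcal{I}_X$ and contained in every $J\in \mathcal{S}$; then $J_0:=(c)$ will do. I would construct such a $c$ by adapting the standard theory of big test elements for $F$-finite reduced rings from Hochster--Huneke tight closure to the present $\Q$-Gorenstein pair setting, as carried out in \cite{HY,Ta2}. The subtlety is how the factor $\mathcal{I}_X^{c(p^{ne_0}-1)}$---which encodes both the subvariety $X$ and its codimension $c$---interacts with the Grothendieck-duality map $\varphi_{A,ne_0}$; this is handled through the $\Q$-Gorenstein generalization of Fedder's criterion developed in \cite{Ta2}, which converts the membership statement $c\in J$ into an evaluation condition involving a Frobenius trace twisted by $\mathfrak{I}_n$.

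Setting $\widetilde{\tau}_X(A,Z):=\overline{(c)}$ then completes the proof. By the construction of $\overline{(\cdot)}$ it lies in $\mathcal{F}$, it contains $c$ and therefore lies in $\mathcal{S}$, and any other $J'\in\mathcal{S}$ contains $c$ by the test-element property and hence contains $\overline{(c)}$ by the minimality statement above. Uniqueness is automatic from the ``smallest element'' characterization, and since each $\Phi_n$ commutes with localization and the construction is canonical, the affine-local ideals patch to a well-defined coherent ideal sheaf $\widetilde{\tau}_X(A,Z)\subseteq \sO_A$.
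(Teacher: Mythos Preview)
Your proposal is correct and follows essentially the same architecture as the paper's proof: reduce to the affine case, produce a ``test element'' $\gamma$ lying outside every minimal prime of $\mathcal{I}_X$ but inside every compatible ideal in $\mathcal{S}$, and then realize $\widetilde{\tau}_X(A,Z)$ as the smallest compatible ideal containing $\gamma$ (the paper writes this explicitly as the sum $\sum_{n\ge 0}\Phi_n((\gamma))$, which is exactly your $\overline{(\gamma)}$). The one place where the paper is more concrete than you is the construction of $\gamma$: rather than invoking the general Hochster--Huneke/Fedder machinery you allude to, it chooses $g\in\bigcap_i\mathcal{I}_{Z_i}$ not in any minimal prime of $\mathcal{I}_X$ with $D(g)\big|_X$ regular, observes via \cite[Example 2.6]{Ta2} that $D(g)$ is purely $F$-regular along $D(g)\big|_X$, and then applies an argument in the style of \cite[Proposition 3.21]{Sc3} to show some power of $g$ works.
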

\begin{proof}
We will prove that $\widetilde{\tau}_X(A, Z)$ always exists. 
First we suppose that $A$ is affine,  $\sO_A((1-p^{ne_0})K_A) \cong \sO_A$ and $\mathrm{Hom}_{\sO_A}(F^{e_0}_*\sO_A, \sO_A)$ is generated by $\varphi_{A, e_0}$ as an $F^{e_0}_*\sO_A$-module. 
Then $\mathrm{Hom}_{\sO_A}(F^{ne_0}_*\sO_A, \sO_A)$ is generated by $\varphi_{A, ne_0}$ as an $F^{ne_0}_*\sO_A$-module for all $n \ge 1$. 
Here we use the following claim.

\begin{cl}
There exists an element $\gamma \in \sO_A$ not contained in any minimal prime ideal of $\mathcal{I}_X$ and satisfying the following property:  for every $\delta \in \sO_A$ not contained in any minimal prime of $\mathcal{I}_X$, there exists an integer $n \ge 1$ such that 
$$\gamma \in \varphi_{A, ne_0}(F^{ne_0}_*(\delta \mathcal{I}_X^{c(p^{ne_0}-1)}\mathcal{I}_{Z_1}^{\lceil t_1(p^{ne_0}-1) \rceil} \cdots \mathcal{I}_{Z_m}^{\lceil t_m(p^{ne_0}-1) \rceil})).$$
\end{cl}
\begin{proof}
Suppose that $g \in \bigcap_i \mathcal{I}_{Z_i}$ is an element not contained in any minimal prime of $\mathcal{I}_X$ such that $D(g)\big|_X \subseteq X$ is regular.  By \cite[Example 2.6]{Ta2}, $D(g)$ is purely $F$-regular along $D(g)\big|_X$. 
It then follows from an argument similar to \cite[Proposition 3.21]{Sc3} that some power of $g$ satisfies the condition of the claim. 
\end{proof}

Let $\gamma \in \sO_A$ be an element satisfying the conditions of the above claim. 
Then we will show that 
$$\widetilde{\tau}_X(A, Z)=\sum_{n \ge 0} \varphi_{A, ne_0}(F^{ne_0}_*(\gamma \mathcal{I}_X^{c(p^{ne_0}-1)}\mathcal{I}_{Z_1}^{\lceil t_1(p^{ne_0}-1)\rceil} \cdots \mathcal{I}_{Z_m}^{\lceil t_m(p^{ne_0}-1) \rceil})).$$
It is easy to check that $\sum_{n \ge 0} \varphi_{A, ne_0}(F^{ne_0}_*(\gamma \mathcal{I}_X^{c(p^{ne_0}-1)}\mathcal{I}_{Z_1}^{\lceil t_1(p^{ne_0}-1)\rceil} \cdots \mathcal{I}_{Z_m}^{\lceil t_m(p^{ne_0}-1) \rceil} ))$ is the smallest ideal $J \subseteq \sO_A$ containing $\gamma$ and satisfying 
$$\varphi_{A,ne_0}(F^{ne_0}_*(J\mathcal{I}_X^{c(p^{ne_0}-1)}\mathcal{I}_{Z_1}^{\lceil t_1(p^{ne_0}-1) \rceil} \cdots \mathcal{I}_{Z_m}^{\lceil t_m(p^{ne_0}-1) \rceil})) \subseteq J$$
for all $n \ge 1$. 
On the other hand, if an ideal $I \subseteq \sO_A$ is not contained in any minimal prime of $\mathcal{I}_X$ and satisfying 
$$\varphi_{A,ne_0}(F^{ne_0}_*(I \mathcal{I}_X^{c(p^{ne_0}-1)}\mathcal{I}_{Z_1}^{\lceil t_1(p^{ne_0}-1) \rceil} \cdots \mathcal{I}_{Z_m}^{\lceil t_m(p^{ne_0}-1) \rceil})) \subseteq I$$
for all $n \ge 1$, 
then $\gamma$ is forced to be in $I$ by definition.  
This complete the proof when $A$ is affine and $\mathrm{Hom}_{\sO_A}(F^{e_0}_*\sO_A, \sO_A)$ is generated by $\varphi_{A, e_0}$ as an $F^{e_0}_*\sO_A$-module.

In the general case, $\widetilde{\tau}_X(A, Z)$ is obtained by gluing the constructions on affine charts. 
\end{proof}

\begin{rem}
The definition of $\widetilde{\tau}_X(A,Z)$ is independent of the choice of $e_0$. 
\end{rem}

Next, we will give a definition of $F$-singularities of pairs and $F$-pure thresholds. 

\begin{defn}[\textup{\cite[Definition 3.1]{Ta}, \cite[Proposition 3.3]{Sc}, cf.~\cite[Proposition 5.3]{Sc}}]\label{F-sing}
Let $X$ be an $F$-finite integral normal scheme of characteristic $p>0$ and $D$ be an effective $\Q$-divisor on $X$. 
Let $Z=\sum_{i=1}^m t_i Z_i$ be a formal combination where the $t_i$ are nonnegative real numbers and the $Z_i$ are proper closed subschemes of $X$. 
Fix an arbitrary point $x \in X$. 
\renewcommand{\labelenumi}{(\roman{enumi})}
\begin{enumerate}
\item
$((X,D); Z)$ is said to be \textit{strongly $F$-regular} at $x$ if for every nonzero $\gamma \in \sO_{X, x}$, there exist an integer $e \ge 1$ and $\delta \in \mathcal{I}_{Z_1, x}^{\lceil t_1(p^{e}-1) \rceil} \cdots \mathcal{I}_{Z_m, x}^{\lceil t_m(p^{e}-1) \rceil}$ such that 
$$\gamma \delta F^e: \sO_{X, x} \to F^e_*\sO_{X}(\lceil (p^e-1) D \rceil)_x \quad a \mapsto \gamma \delta a^{p^e}$$ 
splits as an $\sO_{X, x}$-module homomorphism. 
\item 
$((X,D); Z)$ is said to be \textit{sharply $F$-pure} at $x$ if there exist an integer $e \ge 1$ and $\delta \in \mathcal{I}_{Z_1, x}^{\lceil t_1(p^{e}-1) \rceil} \cdots \mathcal{I}_{Z_m, x}^{\lceil t_m(p^{e}-1) \rceil}$ such that 
$$\delta F^e: \sO_{X, x} \to F^e_*\sO_{X}(\lceil (p^e-1)D \rceil)_x \quad a \mapsto \delta a^{p^e}$$ splits as an $\sO_{X, x}$-module homomorphism. 
\end{enumerate}
We simply say that $(X; Z)$ is strongly $F$-regular (resp.~sharply $F$-pure) at $x$ if so is $((X, 0); Z)$.  
We say that $(X, D)$ is strongly $F$-regular (resp.~sharply $F$-pure) if so is $((X, D); \emptyset)$.  
Also, we say that $((X,D); Z)$ is strongly $F$-regular (resp.~sharply $F$-pure) if so is it for all $x \in X$. 
\begin{enumerate}
\item[(iii)]
Suppose that $(X, D)$ is sharply $F$-pure at $x$. 
Then the \textup{$F$-pure threshold} $\mathrm{fpt}_x((X,D); Z)$ of $Z$ at $x$ is defined to be 
$$\quad \mathrm{fpt}_x((X,D); Z):=\sup\{t \in \R_{\ge 0} \mid ((X, D); tZ)\textup{ is sharply $F$-pure at $x$}\}.$$
\end{enumerate}
We denote this threshold by $\mathrm{fpt}_x(X; Z)$ when $D=0$.
\end{defn}

\begin{rem}
Let $A$ and $Z$ be as in Proposition-Definition \ref{test ideal def}. Then $(A; Z)$ is strongly $F$-regular at a point $x \in A$ if and only if $\widetilde{\tau}(A, Z)_x=\sO_{A, x}$. 
\end{rem}

There exists a criterion for sharp $F$-purity, so-called the Fedder type criterion, which we will use later. 

\begin{lem}[\textup{\cite[Lemma 1.6]{Fe}, \cite[Theorem 4.1]{Sc}}]\label{Fedder}
Let $A$ be an $F$-finite regular integral affine scheme of characteristic $p>0$ and $X \subseteq A$ be a reduced equidimensional closed subscheme. 
\begin{enumerate}
\item For each nonnegative integer $e$, the natural morphism 
$$F^e_*(\mathcal{I}_{X}^{[p^e]}:\mathcal{I}_{X}) \cdot  \mathscr{H}\mathrm{om}_{\sO_{A}}(F^e_*\sO_{A}, \sO_{A}) \to \mathscr{H}\mathrm{om}_{\sO_{X}}(F^e_*\sO_{X}, \sO_{X})$$
sending $s \cdot \varphi_A$ to $\overline{\varphi_A \circ F^e_*(\times s)}$ induces the isomorphism 
$$\frac{F^e_*(\mathcal{I}_{X}^{[p^e]}:\mathcal{I}_{X}) \cdot  \mathscr{H}\mathrm{om}_{\sO_{A}}(F^e_*\sO_{A}, \sO_{A})}{F^e_*\mathcal{I}_{X}^{[p^e]} \cdot \mathscr{H}\mathrm{om}_{\sO_{A}}(F^e_*\sO_{A}, \sO_{A})} \cong \mathscr{H}\mathrm{om}_{\sO_{X}}(F^e_*\sO_{X}, \sO_{X}).$$

\item Let $Z=\sum_{i=1}^m t_i Z_i$ be a formal combination where the $t_i$ are nonnegative real numbers and the $Z_i$ are proper closed subschemes of $A$ which do not contain any
component of $X$ in their support. 
Let $x \in X$ be an arbitrary point.   
Then the following conditions are equivalent to each other:
\begin{enumerate}
\item[\textup{(a)}] $(X; Z\big|_X)$ is sharply $F$-pure at $x$, 
\item[\textup{(b)}] there exists an integer $e_0 \ge 1$ such that 
$$(\mathcal{I}_{X, x}^{[p^{e_0}]}:\mathcal{I}_{X,x})\mathcal{I}_{Z_1, x}^{\lceil t_1(p^{e_0}-1) \rceil} \cdots  \mathcal{I}_{Z_m, x}^{\lceil t_m(p^{e_0}-1) \rceil}\not\subseteq \m_{A, x}^{[p^{e_0}]},$$
which is equivalent to saying that 
$$(\mathcal{I}_{X, x}^{[p^{ne_0}]}:\mathcal{I}_{X,x})\mathcal{I}_{Z_1, x}^{\lceil t_1(p^{ne_0}-1) \rceil} \cdots  \mathcal{I}_{Z_m, x}^{\lceil t_m(p^{ne_0}-1) \rceil}\not\subseteq \m_{A, x}^{[p^{ne_0}]}$$
for all integers $n \ge 1$. 
Here, $\m_{A, x} \subseteq \sO_{A, x}$ denotes the maximal ideal of $x$. 
\end{enumerate}
\end{enumerate}
\end{lem}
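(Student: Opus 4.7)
\medskip

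\noindent\textbf{Plan of proof.}

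For part (1), I would begin with the short exact sequence
\[ 0 \to F^e_*\mathcal{I}_X \to F^e_*\sO_A \to F^e_*\sO_X \to 0 \]
of $\sO_A$-modules. Because $A$ is regular and $F$-finite, Kunz's theorem makes $F^e_*\sO_A$ locally free, so $\mathscr{H}\mathrm{om}_{\sO_A}(F^e_*\sO_A, -)$ is exact and commutes with tensor products; applying $\mathscr{H}\mathrm{om}_{\sO_A}(-, \sO_X)$ and using $\mathscr{H}\mathrm{om}_{\sO_X}(F^e_*\sO_X, \sO_X)=\mathscr{H}\mathrm{om}_{\sO_A}(F^e_*\sO_X, \sO_X)$ yields an identification of the first sheaf with the subsheaf of $\mathscr{H}\mathrm{om}_{\sO_A}(F^e_*\sO_A,\sO_A)\otimes \sO_X$ consisting of $[\varphi]$ with $\varphi(F^e_*\mathcal{I}_X)\subseteq \mathcal{I}_X$. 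The identification of the $\sO_A$-action with the $F^e_*\sO_A$-action ($a\cdot\varphi=F^e_*(a^{p^e})\cdot\varphi$) turns $\mathcal{I}_X\cdot \mathscr{H}\mathrm{om}$ into $F^e_*\mathcal{I}_X^{[p^e]}\cdot \mathscr{H}\mathrm{om}$, producing the denominator. Picking a local generator $\varphi_A$ of $\mathscr{H}\mathrm{om}_{\sO_A}(F^e_*\sO_A,\sO_A)$ as $F^e_*\sO_A$-module, writing an element as $F^e_*s\cdot\varphi_A$, the condition $\varphi_A(F^e_*(s\mathcal{I}_X))\subseteq\mathcal{I}_X$ is equivalent to $s\mathcal{I}_X\subseteq \mathcal{I}_X^{[p^e]}$ by the standard Fedder duality (under $\mathscr{H}\mathrm{om}_{\sO_A}(F^e_*\sO_A,\sO_A)\cong F^e_*\sO_A$, the submodule of maps with image in an ideal $J$ corresponds to $F^e_*J^{[p^e]}$). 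This gives the claimed numerator $F^e_*(\mathcal{I}_X^{[p^e]}:\mathcal{I}_X)\cdot\mathscr{H}\mathrm{om}$ and completes part (1).

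For the implication (a)$\Leftrightarrow$(b) in part (2), I would interpret sharp $F$-purity of $(X;Z|_X)$ at $x$ as the existence of $e\ge 1$, an element $\delta\in \mathcal{I}_{Z_1,x}^{\lceil t_1(p^e-1)\rceil}\cdots \mathcal{I}_{Z_m,x}^{\lceil t_m(p^e-1)\rceil}$ (lifted from $X$), and a splitting $\psi\in \mathrm{Hom}(F^e_*\sO_{X,x},\sO_{X,x})$ with $\psi(F^e_*\delta)$ a unit. Part (1) lets me lift $\psi$ to $F^e_*t\cdot \varphi_A$ with $t\in (\mathcal{I}_{X,x}^{[p^e]}:\mathcal{I}_{X,x})$; the unit condition becomes $\varphi_A(F^e_*(t\delta))\notin \m_{A,x}$, which by Fedder duality (applied to the maximal ideal) is equivalent to $t\delta\notin \m_{A,x}^{[p^e]}$. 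Since $t$ ranges over $(\mathcal{I}_{X,x}^{[p^e]}:\mathcal{I}_{X,x})$ and $\delta$ over the $Z_i$-product ideal, this is exactly the non-containment in (b).

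For the internal equivalence in (b) (one $e_0$ versus all $ne_0$), the reverse direction is immediate; for the forward direction, given $u\in (\mathcal{I}_{X,x}^{[p^{e_0}]}:\mathcal{I}_{X,x})\cdot \prod_i \mathcal{I}_{Z_i,x}^{\lceil t_i(p^{e_0}-1)\rceil}$ with $u\notin \m_{A,x}^{[p^{e_0}]}$, I would form the iterated product
\[ v_n := u\cdot u^{p^{e_0}}\cdot u^{p^{2e_0}}\cdots u^{p^{(n-1)e_0}}. \]
Induction gives $v_n\cdot\mathcal{I}_X\subseteq \mathcal{I}_X^{[p^{ne_0}]}$ (using $u^{p^{je_0}}\mathcal{I}_X^{[p^{je_0}]}\subseteq \mathcal{I}_X^{[p^{(j+1)e_0}]}$), so $v_n\in (\mathcal{I}_X^{[p^{ne_0}]}:\mathcal{I}_X)$; the $Z_i$-factor of $v_n$ lies in $\mathcal{I}_{Z_i}^{(1+p^{e_0}+\cdots+p^{(n-1)e_0})\lceil t_i(p^{e_0}-1)\rceil}$, whose integer exponent dominates $\lceil t_i(p^{ne_0}-1)\rceil$ because it is an integer bounded below by $t_i(p^{ne_0}-1)$. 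For the non-containment $v_n\notin \m_{A,x}^{[p^{ne_0}]}$, I would pass to a regular system of parameters $x_1,\dots,x_d$ of the complete regular local ring $\widehat{\sO}_{A,x}$; writing $u\equiv \sum_\alpha c_\alpha x^\alpha\pmod{\m_{A,x}^{[p^{e_0}]}}$ with $0\le \alpha_i<p^{e_0}$ and some $c_{\alpha_0}\neq 0$, Freshman's dream yields $v_n\equiv \sum_{\alpha^{(0)},\dots,\alpha^{(n-1)}} c_{\alpha^{(0)}}\cdots c_{\alpha^{(n-1)}}^{p^{(n-1)e_0}} x^{\sum_j p^{je_0}\alpha^{(j)}}$ modulo $\m_{A,x}^{[p^{ne_0}]}$, and the $p^{e_0}$-adic digit expansion shows these monomials are distinct basis elements with exponents $<p^{ne_0}$, so the diagonal term $x^{\alpha_0(1+p^{e_0}+\cdots+p^{(n-1)e_0})}$ survives.

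The main obstacle is the Fedder duality statement invoked in part (1): under $\mathscr{H}\mathrm{om}_{\sO_A}(F^e_*\sO_A,\sO_A)\cong F^e_*\sO_A$, ideal-valued sub-$F^e_*\sO_A$-modules correspond to Frobenius powers. I would reduce to the local case, where the isomorphism comes from Grothendieck duality (available because $F^e:A\to A$ is finite flat for $A$ regular $F$-finite) together with the canonical identification of $\omega_{A/A^{p^e}}$ with a trivial line bundle after choice of a local generator. Once this is in hand, parts (1) and (2) both reduce to bookkeeping with the $F^e_*\sO_A$-structure and, in the iteration step, the elementary inequality $(1+p^{e_0}+\cdots+p^{(n-1)e_0})\lceil t_i(p^{e_0}-1)\rceil\ge \lceil t_i(p^{ne_0}-1)\rceil$.
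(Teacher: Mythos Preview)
The paper does not supply a proof of this lemma: it is quoted from \cite{Fe} and \cite{Sc}, with only the remark that (2) is an easy consequence of (1). Your proposal therefore goes beyond the paper. Your treatment of (1) and of the equivalence (a)$\Leftrightarrow$(b) in (2) is the standard Fedder argument and is correct.

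There is, however, a genuine gap in your proof of the internal equivalence in (b). The congruence you assert,
\[
v_n \equiv \sum_{\alpha^{(0)},\dots,\alpha^{(n-1)}} c_{\alpha^{(0)}}\cdots c_{\alpha^{(n-1)}}^{p^{(n-1)e_0}}\, x^{\sum_j p^{je_0}\alpha^{(j)}} \pmod{\m_{A,x}^{[p^{ne_0}]}}
\]
with the $\alpha^{(j)}$ restricted to have entries $<p^{e_0}$, is false in general: writing $u=u_0+w$ with $w\in\m^{[p^{e_0}]}$, the cross terms $w^{p^{je_0}}\!\cdot\!\prod_{l>j}u_0^{p^{le_0}}$ need not lie in $\m^{[p^{ne_0}]}$. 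For example, with $p^{e_0}=2$ and $u=x+y^2\in k[[x,y]]$ one gets $v_2=u^3\equiv x^3+x^2y^2\pmod{(x^4,y^4)}$, not $x^3$. Worse, even working with the full power--series expansion, the diagonal monomial can genuinely cancel: for $u=x+xy+xy^3$ over $\F_2$ and $\alpha_0=(1,1)$ one computes $u^3=x^3(1+y+y^2+y^5+y^6+y^7+y^9)$, so the coefficient of $x^3y^3$ is zero. Thus ``the diagonal term survives'' fails for this (legitimate) choice of $\alpha_0$, and you have not indicated how to choose $\alpha_0$ to avoid such cancellation.

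The clean fix, which is how \cite{Sc} handles the iteration, bypasses monomial bookkeeping. From $u\notin\m^{[p^{e_0}]}$ one gets $\psi\in\Hom_{\sO_{A,x}}(F^{e_0}_*\sO_{A,x},\sO_{A,x})$ with $\psi(F^{e_0}_*u)=1$; the $n$-fold composite $\psi^{(n)}=\psi\circ F^{e_0}_*\psi\circ\cdots\circ F^{(n-1)e_0}_*\psi$ then satisfies $\psi^{(n)}(F^{ne_0}_*v_n)=1$ by a one--line induction using $\psi(F^{e_0}_*(u\cdot a^{p^{e_0}}))=a\,\psi(F^{e_0}_*u)=a$, and hence $v_n\notin\m^{[p^{ne_0}]}$. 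Everything else in your $v_n$ construction (the colon--ideal membership and the exponent inequality for the $\mathcal{I}_{Z_i}$ factors) is correct.
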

We remark that (2) is an easy consequence of (1) in Lemma \ref{Fedder}. 

%%%%%%%%%%%%%%%%%%%%%%%%%%%%%%%%%%%%%%%%%%%%%%%%%%%%%%%%%%%%%%%%%%%%%%%%%%%%%%%%%%%%%%%%%%%%%%%%%%%%%%

\subsection{Singularities of the minimal model program}
In this subsection, we recall the definitions of adjoint ideal sheaves, multiplier ideal sheaves and singularities of pairs. The reader is referred to \cite{La} for basic theory of multiplier ideal sheaves and to \cite{Ei}, \cite{Ta2} for that of adjoint ideal sheaves. 

Let $X$ be a normal variety over an algebraically closed field $K$ of characteristic zero and let $Z=\sum_i t_i Z_i$ be a formal combination where the $t_i$ are nonnegative real numbers and the $Z_i$ are proper closed subschemes of $X$. 
A \textit{log resolution} of the pair $(X, Z)$ is a proper birational morphism $\pi:\widetilde{X} \to X$ with $X$ a smooth variety such that all scheme theoretic inverse images $\pi^{-1}(Z_i)$ are divisors and in addition $\bigcup_{i} \mathrm{Supp}\; \pi^{-1}(Z_i) \cup \mathrm{Exc}(\pi)$ is a simple normal crossing divisor. 
The existence of log resolutions is guaranteed by Hironaka's desingularization theorem \cite{Hi}.

\begin{defn}\label{adjoint ideal 2}
Let $X$ and $Z$ be as above and let $D=\sum_k d_k D_k$ be a boundary divisor on $X$, that is, $D$ is a $\Q$-divisor on $X$ with $0 \le d_k \le 1$ for all $k$. 
In addition, we assume that  $K_X+D$ is $\Q$-Cartier and no component of $\lfloor D \rfloor$ is contained in the support of the $Z_i$. 
Fix a log resolution $\pi: \widetilde{X} \to X$ of $(X, D+Z)$ such that $\pi^{-1}_*\lfloor D \rfloor$ is smooth. 
Then the \textit{adjoint ideal sheaf} $\adj_D(X, Z)$ of $(X, Z)$ along $D$ is defined to be 
$$\adj_D(X, Z)=\pi_*\sO_{\widetilde{X}}(\lceil K_{\widetilde{X}}-\pi^*(K_X+D)-\sum_i t_i \ \pi^{-1}(Z_i) \rceil+\pi^{-1}_*\lfloor D \rfloor) \subseteq \sO_X. $$
When $D=0$, we denote this ideal sheaf by $\J(X, Z)$ and call it the \textit{multiplier ideal sheaf} associated to $(X, Z)$. 
\end{defn}

\begin{defn}\label{sing of pairs}
Let $X$ and $Z$ be as above and let $D$ be a $\Q$-divisor on $X$ such that $K_X+D$ is $\Q$-Cartier. 
Fix a log resolution $\pi: \widetilde{X} \to X$ of $(X, D+Z)$, and then we can write 
$$K_{\widetilde{X}}=\pi^*(K_X+D)+\sum_{i}t_i \ \pi^{-1}(Z_i)+\sum_j a_j E_j, $$
where the $a_j$ are real numbers and the $E_j$ are prime divisors on $\widetilde{X}$. 
Fix an arbitrary point $x \in X$. 
\renewcommand{\labelenumi}{(\roman{enumi})}
\begin{enumerate}
\item $((X,D);  Z)$ is said to be \textit{klt} at $x$ if $a_j >-1$ for all $j$ such that $x \in \pi(E_j)$. 
\item $((X,D); Z)$ is said to be \textit{log canonical} at $x$ if $a_j \ge -1$ for all $j$ such that $x \in \pi(E_j)$. 
\end{enumerate}
When $X$ is $\Q$-Gorenstein and $D=0$, we simply say that $(X; Z)$ is kit (resp.~log canonical) at $x$ instead of saying that  $((X, 0); Z)$ is kit (resp.~log canonical) at $x$.  
When $Z=\emptyset$, we simply say that $(X, D)$ is kit (resp.~log canonical) at $x$ instead of saying that  $((X, D); \emptyset)$ is kit (resp.~log canonical) at $x$.  
Also, we say that $((X,D); Z)$ is klt (resp.~log canonical) if so is it for all $x \in X$. 
\begin{enumerate}
\item[(iii)] 
Suppose that $(X, D)$ is log canonical at $x$. 
Then the \textit{log canonical threshold} $\lct_x((X,D); Z)$ of $Z$ at $x$ is defined to be
$$\lct_x((X,D); Z):=\sup\{t \in \R_{\ge 0} \mid ((X, D);tZ)\textup{ is log canonical at }x\}.$$
\end{enumerate}
We simply denote this threshold by $\lct_x(X; Z)$ if $X$ is $\Q$-Gorenstein and $D=0$. 
\end{defn}

When the ambient variety is smooth, we can generalize the notion of adjoint ideal sheaves to the higher codimension case. 
Let $A$ be a smooth variety over an algebraically closed field of characteristic zero and $X \subseteq A$ be a reduced equidimensional closed subscheme of codimension $c$. 

\begin{defn}[\textup{\cite[Definition 1.6]{Ta2}, cf.~\cite[Definition 3.4]{Ei}}]\label{adjoint ideal}
Let the notation be as above. 
Let $Z=\sum_i t_i Z_i$ be a  formal combination where the $t_i$ are nonnegative real numbers and the $Z_i$ are proper closed subschemes of $A$ which do not contain any component of $X$ in their support. 
\renewcommand{\labelenumi}{(\roman{enumi})}
\begin{enumerate}
\item 
Let $f:A' \to A$ be the blow-up of $A$ along $X$ and $E$ be the reduced 
exceptional divisor of $f$ that dominates $X$. 
Let $g: \widetilde{A} \to A'$ be a log resolution of $(A', f^{-1}(X)+\sum_i f^{-1}(Z_i))$ so that the strict transform $g^{-1}_*E$ is smooth and set $\pi=f \circ g$.
Then the \textit{adjoint ideal sheaf} $\adj_X(A,Z)$ of the pair $(A,Z)$ along $X$ is defined to be 
$$\adj_X(A,Z):=\pi_*\sO_{\widetilde{A}}(K_{\widetilde{A}/A}-c \ \pi^{-1}(X)-\lfloor \sum_i t_i \ \pi^{-1}(Z_i) \rfloor +g^{-1}_*E).$$

\item $(A; Z)$ is said to be \textit{plt} along $X$ if $\adj_X(A, Z)=\sO_A$. 
\end{enumerate}
\end{defn}

\begin{rem}
(1) 
The above definitions (Definitions \ref{adjoint ideal 2}, \ref{sing of pairs}  and \ref{adjoint ideal}) are independent of the choice of a log resolution used to define them. 

(2) 
Let $X$ and $Z$ be as in Definition \ref{sing of pairs}, and assume that $X$ is $\Q$-Gorenstein. Then $(X; Z)$ is klt at a point $x \in X$ if and only if $\J(X, Z)_x=\sO_{X, x}$. 
\end{rem}

%%%%%%%%%%%%%%%%%%%%%%%%%%%%%%%%%%%%%%%%%%%%%%%%%%%%%%%%%%%%%%%%%%%%%%%%%%%%%%%%%%%%%%%%%%%%%%%%%%%%%%

\section{Reduction from characteristic zero to characteristic $p$}
In this section, we briefly review how to reduce things from characteristic zero characteristic $p > 0$. Our main references are \cite[Chapter 2]{HH} and \cite[Section 3.2]{MS}.

Let $X$ be a scheme of finite type over a field $K$ of characteristic zero and $Z=\sum_i t_i Z_i$ be a  formal combination where the $t_i$ are real numbers and the $Z_i$ are proper closed subschemes of $X$. 
Choosing a suitable finitely generated $\Z$-subalgebra $B$ of $K$, we can construct a scheme $X_B$ of finite type over $B$ and closed subschemes $Z_{i, B} \subsetneq X_B$  such that 
there exist isomorphisms
\[\xymatrix{
X \ar[r]^{\cong \hspace*{3em}} &  X_B \times_{\Spec B} \Spec K\\
Z_i \ar[r]^{\cong \hspace*{3em}} \ar@{^{(}->}[u] & Z_{i, B} \times_{\Spec B} \Spec K. \ar@{^{(}->}[u]\\
}\]
Note that we can enlarge $B$ by localizing at a single nonzero element and replacing $X_B$ and $Z_{i,B}$ with the corresponding open subschemes. 
Thus, applying the generic freeness \cite[(2.1.4)]{HH}, we may assume that $X_B$ and the $Z_{i, B}$ are flat over $\Spec B$.
Letting $Z_B:=\sum_i t_i Z_{i,B}$, we refer to $(X_B, Z_B)$ as a \textit{model} of $(X, Z)$ over $B$.   
Given a closed point $\mu \in \Spec B$, we denote by $X_{\mu}$ (resp.~$Z_{i, \mu}$) the fiber of $X_B$ (resp.~$Z_{i, B}$) over $\mu$ and denote $Z_{\mu}:=\sum_i t_i Z_{i, \mu}$.  
Then $X_{\mu}$ is a scheme of finite type over the residue field $\kappa(\mu)$ of $\mu$, which is a finite field of characteristic $p(\mu)$.  
If $X$ is regular, then after possibly enlarging $B$, we may assume that $X_B$ is regular. In particular, there exists a dense open subset $W \subseteq \Spec B$ such that $X_{\mu}$ is regular for all closed points $\mu \in W$. 
Similarly, if $X$ is normal (resp.~reduced, irreducible, locally a complete intersection, Gorenstein, $\Q$-Gorenstein of index $r$, Cohen-Macaulay), then so is $X_{\mu}$ for general closed points $\mu \in \Spec B$. 
Also, $\dim X=\dim X_{\mu}$ and $\codim(Z_i, X)=\codim(Z_{i,\mu}, {X_{\mu}})$ for general closed points $\mu \in \Spec B$. 
In particular, if $X$ is normal and $Z$ is an $\Q$-Weil (resp.~$\Q$-Cartier) divisor on $X$, then $Z_{\mu}$ is an $\Q$-Weil (resp.~$\Q$-Cartier) divisor on $X_{\mu}$ for general closed points $\mu \in \Spec B$. 
If $K_X$ is a canonical divisor on $X$, then $K_{X, \mu}$ gives a canonical divisor $K_{X_{\mu}}$ on $X_{\mu}$ for general closed points $\mu \in \Spec B$. 

Given a morphism $f:X \to Y$ of schemes of finite type over $K$ and a model $(X_B, Y_B)$ of $(X, Y)$ over $B$,  after possibly enlarging $B$, we may assume that $f$ is induced by a morphism $f_B :X_B \to Y_B$ of schemes of finite type over $B$. 
Given a closed point $\mu \in \Spec B$, we obtain a corresponding morphism $f_{\mu}:X_{\mu} \to Y_{\mu}$ of schemes of finite type over $\kappa(\mu)$. 
If $f$ is projective (resp.~finite), then so is $f_{\mu}$ for general closed points $\mu \in \Spec B$. 

\begin{defn}
Let $\mathbf{P}$ be a property defined for a triple $(X,D,Z)$ where $X$ is a scheme of finite type over a finite field, $D$ is an effective $\Q$-divisor on $X$ and $Z$ is an $\R_{\ge 0}$-linear combination of closed subschemes of $X$. 
\renewcommand{\labelenumi}{(\roman{enumi})}
\begin{enumerate}
\item $((X,D);Z)$ is said to be of $\mathbf{P}$ \textit{type} if for a model of $(X, D,Z)$ over a finitely generated $\Z$-subalgebra $B$ of $K$, there exists a dense open subset $W \subseteq \Spec B$ such that $((X_{\mu}, D_{\mu}); Z_{\mu})$ satisfies $\mathbf{P}$ for all closed points $\mu \in W$. 
\item $((X,D); Z)$ is said to be of \textit{dense $\mathbf{P}$ type} if for a model of $(X, D, Z)$ over a finitely generated $\Z$-subalgebra $B$ of $K$, there exists a dense subset of closed points $W \subseteq \Spec B$ such that $((X_{\mu}, D_{\mu}); Z_{\mu})$ satisfies $\mathbf{P}$ for all $\mu \in W$. 
\end{enumerate}
\end{defn}

\begin{rem}
(1) By enlarging $B$, $((X,D); Z)$ is of $\mathbf{P}$ type if and only if for some model over $B$, $\mathbf{P}$ holds for all closed points $\mu \in \Spec B$. 

(2) When $\mathbf{P}$ is strong $F$-regularity,  pure $F$-regularity, or sharp $F$-purity, the above definition is independent of the choice of a model. 
\end{rem}

There exists a correspondence between adjoint ideal sheaves and test ideal sheaves.  

\begin{thm}[\textup{\cite[Theorem 5.3]{Ta4}, cf.~\cite{HY}, \cite{Ta5}}]\label{mult thm}
Let $X$ be a normal variety over a field $K$ of characteristic zero and let $Z=\sum_{i} t_i Z_i$ be a formal combination where the $t_i$ are nonnegative real numbers and the $Z_i$ are proper closed subschemes of $X$. 
Let $D=\sum_j d_j D_j$ be a boundary divisor on $X$ such that  $K_X+D$ is $\Q$-Cartier and no component of $\lfloor D \rfloor$ is contained in the support of the $Z_i$. 
Given any model of $(X, Z, D)$ over a finitely generated $\Z$-subalgebra $B$ of $K$, 
there exists a dense open subset $W \subseteq \Spec B$ such that  
$$\adj_D(X, Z)_{\mu}=\widetilde{\tau}_{D_{\mu}}(X_{\mu},Z_{\mu})$$
for every closed point $\mu \in W$. 
In particular, 
$((X, D); Z)$ is klt at $x$ if and only if it is of strongly $F$-regular type at $x$. 
\end{thm}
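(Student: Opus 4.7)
The plan is to construct a common log resolution, reduce it modulo $p$, and then compare the two ideals via Frobenius trace maps and relative Grauert--Riemenschneider-type vanishing on the resolution.

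First, I fix a log resolution $\pi \colon \widetilde{X} \to X$ of $(X, D+Z)$ with $\pi^{-1}_*\lfloor D \rfloor$ smooth, and spread $(X, D, Z, \widetilde{X}, \pi)$ over a finitely generated $\Z$-subalgebra $B \subseteq K$. After localizing $B$ at a single element, I may assume that for every closed point $\mu \in \Spec B$ the reduction $\pi_\mu \colon \widetilde{X}_\mu \to X_\mu$ is again a log resolution of $(X_\mu, D_\mu+Z_\mu)$, that $K_{\widetilde X_\mu}$ and $\pi^*_\mu(K_{X_\mu}+D_\mu)$ are computed by the same formulas as upstairs, and that $\widetilde{X}_\mu$, together with all relevant simple normal crossing divisors on it, lifts to characteristic zero over $W_2(\kappa(\mu))$. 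This last fact, automatic from spreading out, supplies the positive-characteristic Kodaira--Akizuki--Nakano vanishing through Deligne--Illusie, hence the relative vanishing we will need for $\pi_\mu$.

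Next I rewrite $\adj_D(X,Z) = \pi_*\sO_{\widetilde X}(\Delta)$ with $\Delta := \lceil K_{\widetilde X} - \pi^*(K_X+D) - \sum_i t_i \pi^{-1}(Z_i) \rceil + \pi^{-1}_*\lfloor D \rfloor$; after spreading out, the same formula holds fiberwise. The case $D=0$ is the Hara--Yoshida--Takagi correspondence $\J(X,Z)_\mu = \widetilde{\tau}(X_\mu, Z_\mu)$ (\cite{HY}, \cite{Ta5}), which I use as a black box. For the boundary case I establish both inclusions using the $\varphi$-stability characterization in Proposition-Definition \ref{test ideal def}: for $\widetilde{\tau}_{D_\mu}(X_\mu,Z_\mu) \subseteq \adj_D(X,Z)_\mu$ I verify that $\adj_D(X,Z)_\mu$ itself is closed under all of the maps $\varphi_{X_\mu, ne_0}$ appearing there, which comes from factoring these traces through $\pi_\mu$ on the smooth $\widetilde X_\mu$ and pushing forward via the vanishing above; for the reverse inclusion I invoke the minimality in Proposition-Definition \ref{test ideal def}, constructing an explicit test element by applying the Fedder-type criterion (Lemma \ref{Fedder}) to the smooth normal-crossings pair $(\widetilde X_\mu, \pi^{-1}_*\lfloor D \rfloor_\mu)$ and transporting the resulting splittings down to $X_\mu$.

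The hard part will be ensuring that the Frobenius trace $\varphi_{X_\mu, ne_0}$ on the possibly singular $X_\mu$ is correctly compatible, under $\pi_{\mu*}$, with its smooth counterpart on $\widetilde X_\mu$ along the strict transform of $\lfloor D \rfloor_\mu$. This compatibility hinges on the relative Grauert--Riemenschneider vanishing $R^i(\pi_\mu)_*\sO_{\widetilde X_\mu}(K_{\widetilde X_\mu/X_\mu}+\lceil F \rceil)=0$ for $i>0$ and for effective $\Q$-divisors $F$ with simple normal crossing support, which fails in general in positive characteristic but holds here because of the characteristic-zero lift we arranged during spreading out. The concluding ``in particular'' statement is then immediate: $((X,D); Z)$ is klt at $x$ iff $\adj_D(X,Z)_x=\sO_{X,x}$, iff by the main equality $\widetilde{\tau}_{D_\mu}(X_\mu,Z_\mu)_{x_\mu}=\sO_{X_\mu, x_\mu}$ for $\mu$ in a dense open subset of $\Spec B$, which is exactly strongly $F$-regular type at $x$.
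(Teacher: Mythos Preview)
The paper does not give a proof of this theorem: it is quoted without proof from \cite[Theorem~5.3]{Ta4} (with antecedents in \cite{HY} and \cite{Ta5}) and is used as a black box in Sections~3 and~4. There is therefore no in-paper argument against which to compare your proposal.

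For what it is worth, your outline is broadly in the spirit of the cited references: spread out a fixed log resolution, use the automatic $W_2$-lifting coming from characteristic zero to obtain the relative vanishing needed on $\widetilde X_\mu$, and then compare the two ideals via Frobenius trace. One imprecision is worth flagging. The minimality in Proposition--Definition~\ref{test ideal def} only ever yields inclusions of the form $\widetilde{\tau}_{D_\mu}(X_\mu,Z_\mu)\subseteq J$ for $\varphi$-stable $J$, so it gives $\widetilde{\tau}\subseteq\adj$ once you have checked $\varphi$-stability of $\adj_D(X,Z)_\mu$; it cannot by itself supply the reverse inclusion $\adj\subseteq\widetilde{\tau}$. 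In \cite{HY}, \cite{Ta5}, \cite{Ta4} that direction is obtained from the surjectivity of the pushed-forward Frobenius trace on the resolution (again via the vanishing you invoke), which feeds directly into the explicit description of the test ideal as the sum $\sum_n \varphi_{X_\mu,ne_0}(F^{ne_0}_*(\gamma\,\cdots))$; a Fedder-type splitting on $(\widetilde X_\mu,\pi^{-1}_*\lfloor D\rfloor_\mu)$ alone does not produce this inclusion.
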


An analogous correspondence between log canonicity and $F$-purity, that is, the equivalence of log canonical pairs and pairs of dense sharply $F$-pure type is largely conjectural.  
\begin{conj}\label{lc conj}
Let $X$ be a normal variety over an algebraically closed field $K$ of characteristic zero and $D$ be an effective $\Q$-divisor on $X$ such that $K_X+D$ is $\Q$-Cartier. 
Let $Z=\sum_i t_i Z_i$ be a  formal combination where the $t_i$ are nonnegative rational numbers and the $Z_i$ are proper closed subschemes of $X$. 
Fix an arbitrary point $x \in X$. 
\begin{enumerate}
\item
$((X, D); Z)$ is log canonical at $x$ if and only if it is of dense sharply $F$-pure type at $x$. 
\item
Suppose that $(X, D)$ is log canonical at $x$.
Given any model of $(X, D, Z, x)$ over a finitely generated $\Z$-subalgebra $B$ of $K$, 
there exists a dense subset of closed points $W \subseteq \Spec B$ such that  
$$\lct_x((X, D); Z)=\fpt_{x_{\mu}}((X_{\mu}, D_{\mu}); Z_{\mu})$$
for all $\mu \in W$. 
\end{enumerate}
\end{conj}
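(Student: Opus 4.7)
The plan is to handle the two implications of (1) separately; part (2) will then follow by a threshold sandwich.

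For the direction ``dense sharply $F$-pure type $\Rightarrow$ log canonical,'' I would first fix a log resolution $\pi:\widetilde X\to X$ of $(X,D+Z)$ and spread the data out over a finitely generated $\Z$-subalgebra $B\subseteq K$. For each closed point $\mu\in\Spec B$ in the dense set where $((X_\mu,D_\mu);Z_\mu)$ is sharply $F$-pure at $x_\mu$, Lemma \ref{Fedder} (suitably extended to include the boundary $D_\mu$) produces a nonzero element of the relevant colon ideal avoiding the Frobenius power of the maximal ideal at $x_\mu$. Transferring this element to a section on $\widetilde X_\mu$ via the log resolution, and reading off vanishing orders along exceptional divisors $E_j$ with $x\in\pi(E_j)$, should force discrepancy bounds of the form $a_j\ge -1-c/(p(\mu)^{e_\mu}-1)$ with $c$ independent of $\mu$; letting $p(\mu)\to\infty$ through the dense set yields $a_j\ge -1$, i.e.\ log canonicity at $x$.

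For the converse, my plan would be to invoke (a refined form of) the Musta\c{t}\u{a}--Srinivas conjecture on bijectivity of Frobenius on top cohomology. The strategy is to isolate a minimal non-klt center $W\subseteq X$ through $x$ and, via adjunction on a log resolution, reduce the existence of the required Frobenius splitting at $x_\mu$ to a Frobenius bijectivity statement for $H^{\dim V}(V_\mu,\sO_{V_\mu})$, where $V$ is a smooth projective model naturally built from $W$. A Grothendieck-duality computation then converts this bijectivity into a section realizing the Fedder criterion (Lemma \ref{Fedder}) for sharp $F$-purity of $((X_\mu,D_\mu);Z_\mu)$ at $x_\mu$ on a dense set of primes. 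Part (2) then follows: the easy direction gives $\fpt_{x_\mu}((X_\mu,D_\mu);Z_\mu)\le\lct_x((X,D);Z)$ on a dense subset, and applying the converse of (1) to $((X,D);tZ)$ at rational $t$ just below $\lct_x((X,D);Z)$ produces a matching lower bound in the limit.

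The principal obstacle is, unsurprisingly, the converse direction. The Musta\c{t}\u{a}--Srinivas conjecture is itself widely open, and even granting it, the descent from Frobenius bijectivity on an auxiliary projective variety $V$ to sharp $F$-purity at $x$ carrying the boundary data $D+tZ$ is delicate: one must simultaneously manage the adjunction theory on the log-resolution side and the Grothendieck-duality bookkeeping on the $F$-singularities side, and there is no canonical choice of $V$ when the log-canonical center $W$ is singular or has small codimension. This is presumably why the paper establishes only a conditional implication in general, together with an unconditional proof restricted to the very-general-coefficient setting.
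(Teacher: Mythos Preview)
This statement is Conjecture~\ref{lc conj}; the paper does not prove it, so there is no ``paper's own proof'' to compare against. You already acknowledge this in your final paragraph. What the paper actually establishes around this conjecture is: the implication ``dense sharply $F$-pure type $\Rightarrow$ log canonical'' is known and simply cited (Remark~\ref{F-pure=>lc}~(2), via \cite{HW} and \cite{Ta3}); (1) easily implies (2) (Remark~\ref{F-pure=>lc}~(1)); and the converse of (1) holds conditionally on Conjecture~\ref{MS conj} (Theorem~\ref{implication}), and unconditionally in the very-general-coefficient setting (Theorem~\ref{very general}).

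Since you do sketch a conditional argument for the converse, it is worth comparing it to the paper's actual proof of Theorem~\ref{implication}, which proceeds quite differently. You propose to isolate a minimal log canonical center $W$ through $x$, build a smooth projective model $V$ from it, and apply Conjecture~\ref{MS conj} to $V$. The paper instead first absorbs $Z$ into the boundary by replacing the $Z_i$ with general linear combinations of their generators (Lemma~\ref{Cartier}), then passes to a finite cover $f:X'\to X$ on which $f^*(K_X+D)$ is Cartier, takes a log resolution $\pi:\widetilde X\to X'$, and invokes \cite[Theorem~5.10]{MS}, which already packages Conjecture~\ref{MS conj} as surjectivity, for a dense set of $\mu$, of the Frobenius trace on $\pi_{\mu *}\sO_{\widetilde X_\mu}(K_{\widetilde X_\mu}+\pi_{\mu *}^{-1}D'_{+,\mu}+E_\mu)$. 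Tensoring and Grothendieck duality convert this into sharp $F$-purity of $(X'_\mu,D'_\mu)$ in the sense of Definition~\ref{non-effective case}, and Lemma~\ref{finite map} descends to $(X_\mu,D_\mu)$. No log canonical centers, no auxiliary projective model built from them, and no Fedder-type criterion enter the argument; this sidesteps exactly the difficulties you flag about singular or low-codimension centers. Your outline might be workable, but it is not the route the paper takes and it carries the extra burdens you yourself identify.

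For (2), the clean derivation from (1) is to apply (1) directly at $t=\lct_x((X,D);Z)$, which is rational and at which the pair is still log canonical, rather than to approach it from below and pass to a limit; combined with the easy inequality $\fpt_{x_\mu}\le\lct_x$ valid on a dense open set, this gives equality on a dense set without any limiting argument.
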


\begin{rem}\label{F-pure=>lc}
(1) It is easy to see that (1) implies (2) in Conjecture \ref{lc conj}. 

(2) If $((X, D); Z)$ is of dense sharply $F$-pure type at $x$, then by \cite[Theorem 3.3]{HW} and \cite[Proposition 3.8]{Ta3}, it is log canonical at $x$. 
\end{rem}

\begin{rem}\label{known cases}
Conjecture \ref{lc conj} is known to hold in the following cases (see also Theorem \ref{very general}):
\begin{enumerate}[(i)]
\item 
$X$ is a $\Q$-Gorenstein toric variety, $D=0$ and the $Z_i$ are monomial subschemes. 
\item 
$X$ is the affine space $\mathbb{A}^n_K$, $D=0$ and $Z=t_1 Z_1$ where $Z_1$ is a binomial complete intersection subscheme or a space monomial curve (in the latter case, $n=3$). 
\item
$X$ is a normal surface, $D$ is an integral effective divisor on $X$ and $Z=\emptyset$. 
\item 
$X$ is the affine space $\mathbb{A}^n_K$, $D=0$ and $Z$ is a hypersurface of $X$ such that the coefficients of terms of its defining equation are algebraically independent over $\Q$. 

\end{enumerate}
The case (i) follows from \cite[Theorem 3]{Bl}, the case (ii) does from \cite[Theorem 0.1]{ST} and the case (iv) does from \cite[Theorem 5.16]{He}. 
We explain here how to check the case (iii). 
If $D \neq 0$, then it follows from comparing \cite[Theorem 4.5]{HW} with \cite[Theorem 9.6]{Kaw}. 
So we consider the case where $D =0$. 
By Remark \ref{F-pure=>lc}, it suffices to show that a two-dimensional log canonical singularity $(X, x)$ is of dense $F$-pure type. 
Passing to an index one cover, we may assume that $(X, x)$ is Gorenstein. If it is log terminal, then  by \cite[Theorem 5.2]{Ha} (see also Theorem \ref{mult thm}),  it is of $F$-regular type and, in particular, of dense $F$-pure type.  
Hence we can assume that $(X, x)$ is not log terminal, that is, $(X, x)$ is a cusp singularity or a simple elliptic singularity. 
By \cite[Theorem 1.2]{MeS} or \cite[Theorem 1.7]{Wa}, cusp singularities are of dense $F$-pure type. 
Also, by \cite{MeS}, a simple elliptic singularity with exceptional elliptic curve $E$ is of dense $F$-pure type if and only if for a model $E_B$ of $E$ over a finitely generated $\Z$-subalgebra $B \subseteq K$, there exists a dense subset of closed points $W \subseteq \Spec B$ such that $E_{\mu}$ is ordinary for all $\mu \in W$.  
Applying the same argument as the proof of \cite[Proposition 5.3]{MS}, we may assume that $E$ is defined over $\overline{\Q}$. 
It then follows from Serre's ordinary reduction theorem \cite{Se} that such $W$ always exists. 
Thus, simple elliptic singularities are of dense $F$-pure type. 
\end{rem}

\begin{lem}\label{Cartier}
In order to prove Conjecture \ref{lc conj}, it is enough to consider the case when $Z=\emptyset$. 
\end{lem}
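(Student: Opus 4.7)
The plan is to absorb the formal combination $Z$ into the boundary $\Q$-divisor $D$, reducing Conjecture \ref{lc conj} for $((X,D);Z)$ to its $Z=\emptyset$ case applied to an enlarged pair $(X, D+D_Z)$.

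First I would consolidate the $Z_i$'s into a single ideal. Writing each $t_i=a_i/N$ with a common denominator $N\ge 1$, set $\mathfrak{a}:=\prod_i \mathcal{I}_{Z_i}^{a_i}\subseteq\mathcal{O}_X$. For any prime $p\nmid N$, infinitely many Frobenius exponents $e$ satisfy $N\mid p^e-1$, and for such $e$
\[
\prod_i \mathcal{I}_{Z_i}^{\lceil t_i(p^e-1)\rceil}=\mathfrak{a}^{(p^e-1)/N}=\mathfrak{a}^{\lceil(p^e-1)/N\rceil}.
\]
Combining this with the fact that the Fedder-type criterion (Lemma \ref{Fedder}(2)) lets us test sharp $F$-purity along any infinite subset of exponents, and with the identity $\sum_i t_i\pi^{-1}(Z_i)=(1/N)\pi^{-1}(V(\mathfrak{a}))$ on a log resolution, one checks that both log canonicity and dense sharply $F$-pure type of $((X,D);Z)$ at $x$ agree with those of $\bigl((X,D);\tfrac{1}{N}V(\mathfrak{a})\bigr)$. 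Hence we may assume $Z=t\cdot V(\mathfrak{a})$ for a single nonzero ideal $\mathfrak{a}$ and a positive rational $t$.

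Next, for each rational $s>0$ I would convert $sV(\mathfrak{a})$ into an effective $\Q$-Cartier divisor on $X$ via a general-element trick. On an affine neighborhood of $x$, pick generators $g_1,\dots,g_\ell$ of $\mathfrak{a}$, choose sufficiently general coefficients $(c_{jk})\in K^{m\ell}$ for $m$ large, set $h_j:=\sum_k c_{jk}g_k$, and define $D_s:=\tfrac{s}{m}\sum_{j=1}^m\mathrm{div}(h_j)$. I need the two equivalences
\begin{enumerate}
\item[(i)] $((X,D);sV(\mathfrak{a}))$ is log canonical at $x$ iff $(X,D+D_s)$ is log canonical at $x$,
\item[(ii)] $((X,D);sV(\mathfrak{a}))$ is of dense sharply $F$-pure type at $x$ iff $(X,D+D_s)$ is of dense sharply $F$-pure type at $x$.
\end{enumerate}
Equivalence (i) is the standard Bertini-type result for multiplier ideals (cf.~\cite{La}). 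For (ii), the Fedder-type criterion translates both sides into non-containments in $\m_{A,x_\mu}^{[p^e]}$ of products involving $(\mathcal{I}_{X,x_\mu}^{[p^e]}:\mathcal{I}_{X,x_\mu})$ and appropriate powers of either $\mathfrak{a}_\mu$ or $(h_{1,\mu},\dots,h_{m,\mu})$; after lifting the $c_{jk}$ to a finitely generated $\Z$-subalgebra $B\subseteq K$ and spreading out, generic flatness together with the genericity of $(c_{jk})$ identifies these ideals up to test-ideal-equivalent modification for a dense set of closed points $\mu\in\Spec B$.

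Combining Steps 1 and 2 with the $Z=\emptyset$ case of Conjecture \ref{lc conj} applied to $(X,D+D_t)$ yields part (1). For part (2), the same equivalences are uniform in $s$, so both $\lct_x((X,D);Z)$ and $\fpt_{x_\mu}((X_\mu,D_\mu);Z_\mu)$ can be re-expressed as suprema of $s$ for which $(X,D+D_s)$, resp.~$(X_\mu,D_\mu+D_{s,\mu})$, is log canonical, resp.~sharply $F$-pure at the marked point, and the $Z=\emptyset$ case equates them on a dense subset of primes. The main obstacle is equivalence (ii): transferring the characteristic-zero Bertini/general-element reduction to a positive-characteristic statement that survives reduction modulo $p$ for a dense set of primes, and tracking that the genericity of $(c_{jk})\in K^{m\ell}$ descends to a $B$-relative genericity compatible with the Fedder-type ideal-membership condition. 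This appears to require a careful spread-out argument around the Fedder criterion, rather than any structurally new idea.
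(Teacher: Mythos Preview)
Your general-element idea is exactly the one the paper uses, but you have made the argument much harder than necessary and left yourself with an obstacle that the paper never faces.

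The paper's proof does \emph{not} establish the two-way equivalences (i) and (ii). Instead it first invokes Remark \ref{F-pure=>lc}: the implication ``dense sharply $F$-pure $\Rightarrow$ log canonical'' is already known unconditionally. Hence, to reduce Conjecture \ref{lc conj} to the case $Z=\emptyset$, one only has to show that if $((X,D);Z)$ is log canonical then it is of dense sharply $F$-pure type, and for this one needs only \emph{one} direction of each of your equivalences:
\begin{itemize}
\item the forward direction of (i): if $((X,D);Z)$ is log canonical, then for a general choice of $h_j$ the pair $(X,D+D_Z)$ is log canonical --- this is the standard Bertini-type statement in characteristic zero;
\item the backward direction of (ii): if $(X,D+D_Z)$ is of dense sharply $F$-pure type, then so is $((X,D);Z)$ --- this is trivial, because $\prod_j h_j\in\mathfrak{a}^m$ (in the paper's notation, $g_i\in\mathcal{I}_{Z_i}^{m_i}$), so the Fedder-type witness for $D_Z$ already lies in the required power of the ideal.
\end{itemize}
Chaining these with the $Z=\emptyset$ case gives the missing implication. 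The ``main obstacle'' you describe --- a positive-characteristic Bertini statement making the \emph{forward} direction of (ii) survive reduction mod $p$ --- is genuinely delicate, is not resolved in your proposal, and is simply not needed.

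Two smaller points: your Step 1 (packaging the $Z_i$ into a single ideal $\mathfrak a$) is unnecessary, since one can just produce a separate $g_i$ for each $Z_i$ as the paper does; and your separate treatment of part (2) is also unnecessary, since Remark \ref{F-pure=>lc}(1) already records that (1) implies (2).
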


\begin{proof}
Since the question is local, we work in a sufficiently small neighborhood of $x$.
By Remark 2.5, it suffices to show that if $((X,D); Z)$ is log canonical, then it is of dense sharply $F$-pure type.

Suppose that $((X,D); Z)$ is log canonical. 
Let $h_{i, 1}, \dots, h_{i, m_i}$ be a system of generators for $\mathcal{I}_{Z_i}$ for each $i$. 
Let $g_{i, 1}, \dots, g_{i,m_i}$ be general linear combinations of $h_{i, 1}, \dots, h_{i, m_i}$ with coefficients in $K$, and set $g_i=\prod_{j=1}^{m_i}g_{i,j}$, so that 
$(X, D+\sum_{i} \frac{t_i}m_i \mathrm{div}_X(g_i))$ is log canonical. 
On the other hand, since $g_i \in \mathcal{I}_{Z_i}^{m_i}$, if $(X, D+\sum_{i} \frac{t_i}{m_i} \mathrm{div}_X(g_i))$ is of dense sharply $F$-pure type, then so is $((X,D); Z)$. 
Therefore, it is enough to show that the log canonical pair $(X, D+\sum_{i} \frac{t_i}{m_i} \mathrm{div}_X(g_i))$ is of dense sharply $F$-pure type. 
\end{proof}

Musta\c{t}\u{a} and Srinivas \cite{MS} recently proposed the following more arithmetic conjecture and related it to another conjecture on a comparison between multiplier ideal sheaves and test ideal sheaves. 

\begin{conj}[\textup{\cite[Conjecture 1.1]{MS}}]\label{MS conj}
Let $X$ be an $n$-dimensional smooth projective variety over $\overline{\Q}$. 
Given a model of $X$ over a finitely generated $\Z$-subalgebra $B$ of $\overline{\Q}$, there exists a dense subset of closed points $W \subseteq \Spec B$ such that the action induced by Frobenius on $H^n(X_{\mu}, \sO_{X_{\mu}})$ is bijective for all $\mu \in W$. 
\end{conj}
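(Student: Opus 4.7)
The conjecture asserts that in the family $X_B \to \Spec B$ the Frobenius on $H^n(X_\mu,\sO_{X_\mu})$ is bijective for a dense set of closed points $\mu$. Equivalently, the Hasse--Witt matrix of $X_\mu$ is invertible for such $\mu$; that is, $X$ has \emph{ordinary reduction} at a dense set of primes of its model. A natural plan is to recast the problem via Newton polygons of crystalline cohomology and then import the available ordinary-reduction theorems.

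The first step is to spread out: enlarging $B$, we may assume $X_B / \Spec B$ is smooth and proper, so each $H^n_{\mathrm{cris}}(X_\mu / W(\kappa(\mu)))$ carries a Frobenius whose Newton polygon lies above the Hodge polygon. Bijectivity of Frobenius on $H^n(X_\mu,\sO_{X_\mu})$ is equivalent to the top slope of the Newton polygon reaching $n$, i.e., to the Newton polygon touching the Hodge polygon at its right endpoint; so the conjecture becomes a density statement about ordinary primes for the family. The second step is to import the known ordinary-reduction results and try to piggyback on them: Serre--Tate theory and Serre's theorem for elliptic curves and abelian varieties of CM type (the input already used for simple elliptic singularities in Remark \ref{known cases}), the Bogomolov--Zarhin / Nygaard / Joshi--Rajan theorems for K3 surfaces, and scattered Calabi--Yau cases. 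One would then attempt to reduce general $n$-dimensional $X$ to these low-dimensional cases by combining Serre duality $H^n(X,\sO_X)\cong H^0(X,\omega_X)^\vee$ with Lefschetz restriction to a smooth hyperplane section, or by routing through the Picard/Albanese scheme, or through a motivic decomposition of $X$ that isolates the top Hodge summand.

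\textbf{The main obstacle} is that, outside the low-dimensional or CM cases, there is no known mechanism forcing the Hasse--Witt matrix to be non-degenerate for a positive density of primes: on any single fiber it is an explicit matrix one can write down, but controlling its rank in families requires inputs of an essentially different flavour. The problem is closely connected to the density of the ordinary locus in Shimura varieties and to Mumford--Tate-group arguments, and a full proof of Conjecture \ref{MS conj} is at present open. Any realistic attack would therefore proceed by refining the case-by-case analysis for specific classes of varieties, which is presumably why \cite{MS} treat the statement as a working hypothesis from which further consequences are derived rather than as something to be proved directly.
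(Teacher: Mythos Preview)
The statement in question is a \emph{conjecture}, not a theorem, and the paper does not attempt to prove it: it is quoted from \cite{MS} as a working hypothesis, the subsequent Remark records the known cases (curves of genus $\le 2$ via Serre, surfaces of Kodaira dimension zero via Jang), the Example checks it by hand for Fermat hypersurfaces, and Theorem~\ref{implication} then derives Conjecture~\ref{lc conj} from it. So there is no ``paper's own proof'' to compare against.

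Your write-up is therefore not a proof either, and you correctly recognize this: you reformulate the statement in terms of Newton polygons and ordinary reduction, list the known low-dimensional inputs, sketch some possible reduction strategies (Lefschetz restriction, Albanese, motivic decomposition), and then honestly flag that none of these mechanisms is known to force invertibility of the Hasse--Witt matrix for a dense set of primes in general. That assessment is accurate and consistent with the paper's stance. The only mild overreach is the suggestion that Lefschetz restriction or passage to the Albanese could reduce the general case to the known ones; neither of these actually controls $H^n(X,\sO_X)$ for arbitrary $n$, so they are heuristics rather than a genuine strategy. Otherwise your discussion is a fair summary of why Conjecture~\ref{MS conj} is open, which is all the paper claims as well.
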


\begin{rem}
Conjecture \ref{MS conj} is known to be true when $X$ is a smooth projective curve of genus less than or equal to two (see \cite[Example 5.5]{MS}, which can be traced back to \cite{Og}, \cite{Se}) or a smooth projective surface of Kodaira dimension zero (see \cite[Proposition 2.3]{Ja}). 
\end{rem}

\begin{eg}
We check that Conjecture \ref{MS conj} holds for the Fermat hypersurface $X$ of degree $d$ in $\mathbb{P}^n_{K}$ over a field $K$ of characteristic zero. 
Given a prime number $p$, set $S_p=\F_p[x_0, \dots, x_n]$, $\m_p=(x_0, \dots, x_n) \subseteq S_p$, $f_p=x_0^d+\cdots+x_n^d \in S_p$ and $X_p=\Proj S_p/f_p$.  
Since $H^{n-1}(X_p, \sO_{X_p})=0$ for almost all $p$ when $d \le n$, we consider the case when $d \ge n+1$. 
Note that 
$$H^{n-1}(X_p, \sO_{X_p}) \cong \left[H^{n}_{\m_p}(S_p/f_p)\right]_0 \cong \left[(0:f_p)_{H^{n+1}_{\m_p}(S_p)}\right]_{-d}.$$
Via this isomorphism, the action induced by Frobenius on $H^{n-1}(X_p, \sO_{X_p})$ is identified with 
$$f_p^{p-1}F: \left[(0:f_p)_{H^{n+1}_{\m_p}(S_p)}\right]_{-d} \to \left[(0:f_p)_{H^{n+1}_{\m_p}(S_p)}\right]_{-d}, $$
where $F:H^{n+1}_{\m_p}(S_p) \to H^{n+1}_{\m_p}(S_p)$ is the map induced by Frobenius on $H^{n+1}_{\m_p}(S_p)$. 
Let $\xi=\left[\frac{z}{(x_0\cdots x_n)^m}\right] \in H^{n+1}_{\m_p}(S_p)$ be a homogeneous  element such that $f^{p-1}_pF(\xi)=0$, that is, $f_p^{p-1}z^p \in (x_0^{mp}, \dots, x_n^{mp})$. 
Set $W=\{p \in \Spec \Z \mid p \equiv 1 \mod d \}$, which is a dense subset of $\Spec \Z$, and suppose that $p \in W$. 
Let $a_0, \dots, a_n$ be nonnegative integers such that $\sum_{i=0}^n a_n=d-n-1$. 
Then the term 
$$(x_0^{d(a_0+1)} \cdots x_n^{d(a_n+1)})^{\frac{p-1}{d}}=(x_1^{a_1} \cdots x_n^{a_n})^{p}x_1^{p-a_1-1} \cdots x_n^{p-a_n-1}$$ appears in the expansion of $f_p^{p-1}$. 
Since $\{x_1^{i_1} \cdots x_n^{i_n}\}_{0 \le i_1, \dots, i_n \le p-1}$ is a free basis of $S_p$ as an $S_p^p$-module, $f^{p-1}z^p$ can be written as 
$$f^{p-1}z^p= u (x_1^{a_1} \cdots x_n^{a_n}z)^{p}x_1^{p-a_1-1} \cdots x_n^{p-a_n-1}+\sum_{i_j \ne p-a_j-1}g_{i_1, \cdots, i_n}^p x_1^{i_1} \cdots x_n^{i_n},$$
where $u \in \F_p$ is a nonzero element and $g_{i_1, \cdots, i_n} \in S_p$ for each $0 \le i_1, \dots, i_n \le p-1$. 
Let $\varphi:F_*S_p \to S_p$ be the $S$-linear map sending $x_1^{p-a_1-1} \cdots x_n^{p-a_n-1}$ to $1$ and the other part of the basis to zero. Then 
$$u x_1^{a_1} \cdots x_n^{a_n}z=\varphi(f^{p-1}z^p) \in \varphi( (x_0^{mp}, \dots, x_n^{mp})) \subseteq (x_0^m, \dots, x_n^m).$$
By the definition of the $a_i$, 
one has 
$\m_p^{d-n-1}z \subseteq (x_0^m, \dots, x_n^m)$, that is, $\m_p^{d-n-1}\xi=0$ in $H^{n+1}_{\m_p}(S_p)$. 
This means that 
$\deg \xi \ge -d+1$, and we conclude that $f_p^{p-1}F: \left[(0:f_p)_{H^{n+1}_{\m_p}(S_p)}\right]_{-d} \to \left[(0:f_p)_{H^{n+1}_{\m_p}(S_p)}\right]_{-d}$ is injective for all $p \in W$. 
\end{eg}

The following result comes from a discussion with Karl Schwede, who the author thanks. 

\begin{thm}\label{implication}
If Conjecture \ref{MS conj} holds, then Conjecture \ref{lc conj} holds as well. 
\end{thm}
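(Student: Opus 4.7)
My plan is to reduce Conjecture \ref{lc conj} via Lemma \ref{Cartier} to the assertion that when $Z = \emptyset$, log canonicity of $(X, D)$ at $x$ implies dense sharply $F$-pure type at $x$. Since strongly $F$-regular type implies sharply $F$-pure type, Theorem \ref{mult thm} handles the klt case, so I may assume $(X, D)$ is log canonical but not klt at $x$. In particular some log canonical center passes through $x$.

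Next I would take a log resolution $\pi \colon Y \to X$ of $(X, D)$ and write
$$K_Y + \widetilde{D} + E = \pi^*(K_X + D) + F,$$
where $\widetilde{D} = \pi^{-1}_* D$, $E$ is the reduced sum of the $\pi$-exceptional divisors of log discrepancy zero, and $F \ge 0$ satisfies $\lfloor F \rfloor = 0$. After reducing mod $p \gg 0$, sharp $F$-purity of $(X_\mu, D_\mu)$ at $x_\mu$ translates, via Grothendieck duality and the Fedder-type criterion Lemma \ref{Fedder}, into the non-vanishing at $x_\mu$ of the image of a suitable trace map
$$\pi_* F^e_* \sO_{Y_\mu}\bigl((1-p^e)(K_{Y_\mu} + \widetilde{D}_\mu + E_\mu) + \lceil (p^e-1) F_\mu \rceil\bigr) \to \sO_{X_\mu},$$
for some $e \ge 1$ with $(p^e-1)(K_X + D)$ Cartier. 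Producing a section of this dualizing-type sheaf with the correct local behavior at $x_\mu$ is a global cohomological problem on $Y_\mu$.

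The decisive step is to relate the existence of such a section to the Frobenius action on top cohomology of the log canonical centers. Compactifying and blowing up further if necessary, I may assume $Y$ is smooth projective over $\overline{\Q}$ and each component of $\widetilde{D} + E$ is a smooth projective variety. Using Kawamata--Viehweg vanishing for the klt perturbation contributed by $F$ together with the long exact sequence attached to the stratification of $\widetilde{D} + E$ (a Mayer--Vietoris / residue spectral sequence for SNC boundaries, whose characteristic-$p$ analogue is controlled by splittings of Frobenius on the strata), the obstruction reduces to the bijectivity of the Frobenius action on $H^{\dim V}(V_\mu, \sO_{V_\mu})$ as $V$ ranges over the (finitely many) irreducible components of the strata of $\widetilde{D} + E$ that are minimal log canonical centers through $x$. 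Conjecture \ref{MS conj} applied to each such smooth projective $V$ yields this bijectivity on a dense subset of $\Spec B$, and since a finite intersection of dense subsets is dense, $(X_\mu, D_\mu)$ is sharply $F$-pure at $x_\mu$ for a dense set of $\mu$. This gives Conjecture \ref{lc conj}(1), and hence also (2) by Remark \ref{F-pure=>lc}(1).

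The hardest part will be the cohomological bookkeeping in the previous paragraph: one must globalize $X$ to a projective variety without destroying log canonicity at $x$, arrange that $Y$ and each stratum of $\widetilde{D} + E$ are smooth projective over $\overline{\Q}$, and rigorously identify the obstruction to the required local lifting with the Frobenius action on the top cohomology of minimal log canonical centers. The interplay between the reduced boundary $\widetilde{D} + E$ and the klt error $F$ --- ensuring that vanishing theorems cut down the obstruction to precisely the Mustaţă--Srinivas cohomology and nothing more --- is the technical heart of the argument and will demand care to make uniform in $\mu$.
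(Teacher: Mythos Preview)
Your outline points in the right direction, but it differs from the paper's proof in two structural ways and contains one genuine error.

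\textbf{Comparison with the paper.} The paper does not attempt the ``cohomological bookkeeping'' you describe. Instead, after reducing to $Z=\emptyset$ via Lemma~\ref{Cartier}, it first passes to a finite cover $f:X'\to X$ so that $K_{X'}+D'=f^*(K_X+D)$ with $D'$ an \emph{integral} (possibly non-effective) divisor. This avoids entirely the problem of fractional coefficients in $\widetilde{D}$ that your formula $K_Y+\widetilde{D}+E=\pi^*(K_X+D)+F$ would have to confront (and note your $F$ is not in general effective with $\lfloor F\rfloor=0$: exceptional divisors of positive discrepancy give $F$ coefficients $\ge 1$, and those of negative discrepancy $>-1$ give negative coefficients). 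On the cover the paper then invokes \cite[Theorem~5.10]{MS} as a black box: assuming Conjecture~\ref{MS conj}, that theorem already delivers a single dense set of closed points $\mu$ for which the relevant trace map
\[
\pi_{\mu*}F^e_*\sO_{\widetilde{X}_\mu}(K_{\widetilde{X}_\mu}+\pi_{\mu*}^{-1}D'_{+,\mu}+E_\mu)\longrightarrow \pi_{\mu*}\sO_{\widetilde{X}_\mu}(K_{\widetilde{X}_\mu}+\pi_{\mu*}^{-1}D'_{+,\mu}+E_\mu)
\]
is surjective. The entire stratification/Mayer--Vietoris/vanishing analysis you propose is precisely what goes into the proof of \cite[Theorem~5.10]{MS}; the paper simply cites it. Finally the paper descends sharp $F$-purity from $(X'_\mu,D'_\mu)$ to $(X_\mu,D_\mu)$ via a new notion of sharp $F$-purity for non-effective integral divisors (Definition~\ref{non-effective case}) and a transfer lemma under finite maps (Lemma~\ref{finite map}). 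Your sketch has no analogue of either step.

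\textbf{A genuine gap.} Your sentence ``since a finite intersection of dense subsets is dense'' is false for the dense sets of closed points produced by Conjecture~\ref{MS conj}: these are not open, and two dense sets of closed points in $\Spec B$ can be disjoint. If you want to apply the conjecture separately to each stratum $V$, you must instead apply it once to the product $\prod V$ and use K\"unneth to see that ordinariness of the product forces simultaneous ordinariness of the factors; this is how \cite{MS} obtains a single dense set that works for all strata at once. Relatedly, your assertion that the obstruction localizes to the \emph{minimal} log canonical centers through $x$ is not justified: the argument in \cite{MS} requires ordinariness of \emph{all} strata of the boundary, and cutting this down would need a separate argument.
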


In order to prove Theorem \ref{implication}, we introduce a notion of sharp $F$-purity for non-effective integral divisors.

\begin{defn}\label{non-effective case}
Let $X$ be an $F$-finite normal integral scheme of characteristic $p>0$ and $D$ be a (not necessarily effective) integral divisor on $X$. 
We assume that $K_X+D$ is $\Q$-Cartier with index not divisible by $p$.  
Let $x \in X$ be an arbitrary point. 
We decompose $D$ as $D=D_+-D_-$, where $D_+$ and $D_-$ are effective integral divisors on $X$ which have no common irreducible components. We then say that the pair $(X, D)$ is \textit{sharply $F$-pure} at $x$ if there exists an integer $e _0 >0$ such that for all positive multiples $e=ne_0$ of $e_0$, one has an $\sO_{X,x}$-linear map $\varphi : F^e_*\sO_X((p^e-1)D_++D_-)_x \to \sO_X(D_-)_x$ whose image of  $F^e_*\sO_X(D_-)_x$ contains $1$. 
We say that $(X, D)$ is sharply $F$-pure if it is sharply $F$-pure at every closed point of $X$. 
\end{defn}

If $D$ is an effective integral divisor, then this definition coincides with Definition \ref{F-sing} (ii). 
We need a variant of \cite[Theorem 6.28]{ScT} involving sharp $F$-purity in the sense of Definition \ref{non-effective case}. 
\begin{lem}[\textup{cf.~\cite[Theorem 6.28]{ScT}}]\label{finite map}
Let $\pi:Y \to X$ be a finite separable morphism of $F$-finite normal integral schemes of characteristic $p>0$.
Let $\Delta_X$ be an effective $\Q$-divisor on $X$ such that $K_X+\Delta_X$ is $\Q$-Cartier with index not divisible by $p$. 
Suppose that $\Delta_Y$ is an integral divisor on $Y$ such that $K_Y+\Delta_Y=\pi^*(K_X+\Delta_X)$. 
In addition, we assume that the trace map $\mathrm{Tr}_{Y/X}:\pi_*\sO_Y \to \sO_X$ is surjective. 
Then $(X, \Delta_X)$ is sharply $F$-pure if and only if $(Y, \Delta_Y)$ is sharply $F$-pure in the sense of Definition \ref{non-effective case}.
\end{lem}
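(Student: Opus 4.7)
The plan is to adapt the argument of \cite[Theorem 6.28]{ScT}, which treats effective $\Delta_Y$, by replacing the target $\sO_Y$ of the usual splitting maps with its twist $\sO_Y(\Delta_{Y,-})$ so as to absorb the anti-effective part of $\Delta_Y$. Write $\Delta_Y = \Delta_{Y,+} - \Delta_{Y,-}$ with $\Delta_{Y,\pm}$ effective and with no common components, and fix $e_0 \geq 1$ such that $(p^{e_0}-1)(K_X+\Delta_X)$ is Cartier; such $e_0$ exists because the index of $K_X+\Delta_X$ is prime to $p$. For each positive multiple $e=ne_0$, Grothendieck duality for $F^e$ on the normal schemes $X$ and $Y$ gives reflexive identifications
\begin{align*}
\mathscr{H}\mathrm{om}_{\sO_X}\bigl(F^e_*\sO_X((p^e-1)\Delta_X), \sO_X\bigr) &\cong F^e_*\sO_X\bigl((1-p^e)(K_X+\Delta_X)\bigr), \\
\mathscr{H}\mathrm{om}_{\sO_Y}\bigl(F^e_*\sO_Y((p^e-1)\Delta_{Y,+}+\Delta_{Y,-}), \sO_Y(\Delta_{Y,-})\bigr) &\cong F^e_*\sO_Y\bigl((1-p^e)(K_Y+\Delta_Y)\bigr),
\end{align*}
the $Y$-side using the computation $p^e\Delta_{Y,-}-((p^e-1)\Delta_{Y,+}+\Delta_{Y,-})=(1-p^e)\Delta_Y$ of divisors. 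Applying $\pi_*$ and invoking $K_Y+\Delta_Y=\pi^*(K_X+\Delta_X)$ with the projection formula then yields
$$\pi_*F^e_*\sO_Y\bigl((1-p^e)(K_Y+\Delta_Y)\bigr) \cong F^e_*\bigl(\pi_*\sO_Y \otimes_{\sO_X}\sO_X((1-p^e)(K_X+\Delta_X))\bigr).$$

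Under these identifications, I would check that the $\sO_X$-linear map $\mathrm{Tr}_{Y/X}\otimes \mathrm{id}$ on the right corresponds to the operation sending an $\sO_Y$-linear $\varphi_Y$ to the $\sO_X$-linear $\varphi_X$ obtained by composing $\pi_*\varphi_Y$ with the trace $\pi_*\sO_Y(\Delta_{Y,-}) \to \sO_X$ (extending $\mathrm{Tr}_{Y/X}$ via the natural embedding of $\sO_X$ into $\pi_*\sO_Y(\Delta_{Y,-})$), precomposed with the inclusion $F^e_*\sO_X((p^e-1)\Delta_X) \hookrightarrow F^e_*\pi_*\sO_Y((p^e-1)\Delta_{Y,+}+\Delta_{Y,-})$. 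The hypothesis that $\mathrm{Tr}_{Y/X}$ is surjective then gives surjectivity of $\mathrm{Tr}_{Y/X}\otimes\mathrm{id}$ at every closed point, providing the desired transfer mechanism between splittings: for $(\Leftarrow)$, if $\varphi_Y$ satisfies $1\in\varphi_Y(F^e_*\sO_Y(\Delta_{Y,-})_y)$ then the constructed $\varphi_X$ hits $1$ directly; for $(\Rightarrow)$, given $\varphi_X$ hitting $1$, surjectivity yields some $\varphi_Y$ mapping to $\varphi_X$, and one must check that $\varphi_Y$ can be arranged so that $1 \in \varphi_Y(F^e_*\sO_Y(\Delta_{Y,-})_y)$ for some closed point $y$ above $x$.

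The main obstacle will be the last verification. In the effective-divisor setting of \cite[Theorem 6.28]{ScT} the target of $\varphi_Y$ is $\sO_Y$ and the test is for $1$ in the image of all of $F^e_*\sO_Y$, whereas Definition~\ref{non-effective case} restricts the test to the subsheaf $F^e_*\sO_Y(\Delta_{Y,-})$. The twist by $\sO_Y(\Delta_{Y,-})$ on the target of $\varphi_Y$ is precisely what cancels the anti-effective contribution $-\Delta_{Y,-}$ of $\Delta_Y$ in the ramification formula, and one must make the identification of $\mathrm{Tr}_{Y/X}$ with the natural $\sO_X$-linear map on Hom modules explicit enough, on the level of local generators near each closed point of $Y$ above $x$, to show that the surjective trace correspondence preserves this refined image condition.
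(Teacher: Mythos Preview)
Your Grothendieck-duality identifications and the projection-formula computation match the paper exactly, but the way you organize the two implications differs, and your $(\Rightarrow)$ is left with the very gap you name.

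For $(\Rightarrow)$ the paper does not try to lift a given splitting $\varphi_X$ through a surjective trace map on $\mathscr{H}\mathrm{om}$-modules. Instead it works locally, so that both $\mathscr{H}\mathrm{om}$-modules are free of rank one over $F^e_*\sO_X$ and $F^e_*\sO_Y$ respectively, and---because the $Y$-module is the pullback of the $X$-module---chooses the generator $\varphi_Y$ so that it \emph{extends} the generator $\varphi_X$. The implication then collapses to the single line $1\in\mathrm{Im}\,\varphi_X\subseteq\mathrm{Im}\,\varphi_Y$; the trace hypothesis is not used at all in this direction, and the ``refined image condition'' you worry about never arises because $F^e_*\sO_X$ already sits inside $F^e_*\sO_Y(\Delta_{Y,-})$. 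Your proposed route---lift through $\mathrm{Tr}_{Y/X}\otimes\mathrm{id}$ and then verify the image condition for the lift---is genuinely harder, and you have not indicated how to complete it.

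For $(\Leftarrow)$ your plan is close, but the phrase ``extending $\mathrm{Tr}_{Y/X}$ via the natural embedding of $\sO_X$ into $\pi_*\sO_Y(\Delta_{Y,-})$'' is not a definition: embedding the target does not extend a map defined on a smaller source. The paper makes this step precise by bringing in the ramification divisor $R$. From $\Delta_Y=\pi^*\Delta_X-R$ and $\Delta_X\ge 0$ one gets $R\ge\Delta_{Y,-}$, and the trace canonically extends to $\pi_*\sO_Y(R)\to\sO_X$ (this is the identification of $\mathrm{Tr}_{Y/X}$ with the generator of $\mathscr{H}\mathrm{om}_{\sO_X}(\pi_*\sO_Y,\sO_X)\cong\pi_*\sO_Y(R)$). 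One then chooses a further generator $\widetilde{\varphi}_Y:F^e_*\sO_Y(R)\to\sO_Y(R)$ extending $\varphi_Y$, obtains a commutative square with $F^e_*\mathrm{Tr}_{Y/X}$, $\widetilde{\varphi}_Y$, $\mathrm{Tr}_{Y/X}$, $\varphi_X$, and reads off $1\in\mathrm{Im}\,\varphi_X$ from the surjectivity of trace together with $B\subseteq\mathrm{Im}\,\varphi_Y$. Without invoking $R$ you cannot justify the extension of trace that your argument needs.
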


\begin{proof}
The statement is local on $X$ and so we assume that $X =\Spec A$ and $Y=\Spec B$, where $A$ is a local ring and $B$ is a semi-local ring. 
There exists $e_0 \in \N$ such that $(p^{e_0}-1)(K_X+\Delta)$ is Cartier. 
Then $\Hom_A(F^e_*A((p^e-1)\Delta_X ), A)$ is a free $F^e_*A$-module of rank one for all positive multiples $e = ne_0$ of $e_0$. 
Let $\varphi_X: F^e_*A \to A$ be its generator. 
We decompose $\Delta_Y$ as $\Delta_{Y,+}-\Delta_{Y,-}$, where $\Delta_{Y,+}, \Delta_{Y,-}$ are effective integral divisors on $Y$ which have no common components. 
Then the $F^e_*B$-module
\begin{align*}
\Hom_B(F^e_*B((p^e-1)\Delta_{Y,+}+\Delta_{Y,-}), B(\Delta_{Y,-})) &\cong F^e_*B((1-p^e)(K_Y+\Delta_Y))\\
&= F^e_*\pi^*A((1-p^e)(K_X+\Delta_X))\\
&\cong F^e _*\pi^*A=F^e_*B,
\end{align*}
and we pick its generator $\varphi_Y:F^e_*B(\Delta_{Y,-}) \to B(\Delta_{Y,-})$ extending $\varphi_X: F^e_*A \to A$. 

Suppose that $(X, \Delta_X)$ is sharply $F$-pure. 
By the definition of sharp $F$-purity, after possibly enlarging $e$, we have that $1 \in \mathrm{Im}\; \varphi_X \subseteq \mathrm{Im}\; \varphi_Y$, and hence $(Y, \Delta_Y)$ is sharply $F$-pure. 

Conversely, suppose that $(Y, \Delta_Y)$ is sharply $F$-pure. 
Making $e$ larger if necessary, we may assume that $1 \in \mathrm{Im}\; \varphi_Y$. 
Note that $\Delta_Y=\pi^*\Delta_X-R$ and $R \ge \Delta_{Y,-}$, where $R$ denotes the ramification divisor of $\pi$. 
Then the $F^e_*B$-module
\begin{align*}
& \Hom_B(F^e_*B((p^e-1)\pi^*\Delta_X+R), B(R))\\
\cong & F^e_*\pi^*A((1-p^e)(K_X+\Delta_X))\\
\cong & \Hom_B(F^e_*B((p^e-1)\Delta_{Y,+}+\Delta_{Y,-}), B(\Delta_{Y,-})),
\end{align*}
and we pick its generator $\widetilde{\varphi}_Y:F^e_*B(R) \to B(R)$ extending $\varphi_Y: F^e_*B(\Delta_{Y,-}) \to B(\Delta_{Y,-})$. 
Since the trace map $\mathrm{Tr}_{Y/X}$ corresponds to the ramification divisor $R$, 
we have the following commutative diagram:
$$
\xymatrix{
F^e_*A \ar[r]^{\varphi_X}  & A  \\
F^e_*B(R) \ar[r]^{\widetilde{\varphi}_Y} \ar[u]^{F^e_*\mathrm{Tr}_{Y/X}} & B(R) \ar[u]_{\mathrm{Tr}_{Y/X}}.\\
}
$$
The surjectivity of the trace map $\mathrm{Tr}_{Y/X}:B  \to A$ implies that 
$$1 \in \mathrm{Tr}_{Y/X}(\mathrm{Im}\; \varphi_{Y}) \subseteq \mathrm{Tr}_{Y/X}(\mathrm{Im}\; \widetilde{\varphi}_Y)=\varphi_X( \mathrm{Im}\;  F^e_*\mathrm{Tr}_{Y/X})=\mathrm{Im}\; \varphi_X, $$
because $B \subseteq \mathrm{Im}\; \varphi_Y$. 
Thus, $(X, \Delta_X)$ is sharply $F$-pure. 
\end{proof}

\begin{proof}[Proof of Theorem \ref{implication}]
Let the notation be as in Conjecture \ref{lc conj}.
By Lemma \ref{Cartier}, we may assume that $K_X+D$ is Cartier and $Z=\emptyset$.  
Since the question is local, we work in a sufficiently small neighborhood of  $x$. 
By Remark \ref{F-pure=>lc}, it suffices to show that if $(X, D)$ is log canonical, then it is of dense sharply $F$-pure type. 

Suppose that $(X,D)$ is log canonical. 
By \cite[Section 2.4]{KM}, there exists a finite morphism $f: X' \to X$ from a normal variety $X'$ over $K$ such that $f^*(K_X+D)$ is Cartier. 
Let $D'$ be a (not necessarily effective) integral divisor on $X'$ such that $K_{X'}+D'=f^*(K_X+D)$. 
It then follows from \cite[Proposition 5.20]{KM} that $(X', D')$ is log canonical. 
We decompose $D'$ as $D'=D'_+-D'_-$, where $D'_+$ and $D'_-$ are effective integral divisors on $X$ which have no common components. 
Take a log resolution $\pi: \widetilde{X} \to X'$ of $(X', D')$, and denote by $E$ the reduced divisor supported on the $\pi$-exceptional locus $\mathrm{Exc}(\pi)$. 
 Let $(X_B, D_B,  X'_B, D'_B=D'_{+,B}-D'_{-, B}, \pi_B, E_{B})$ be a model of $(X, D, X', D'=D'_+-D'_-, \pi, E)$ over a finitely generated $\Z$-subalgebra $B$ of $K$. 
After possibly enlarging $B$, we may assume that $K_{X_{\mu}}+D_{\mu}$ is $\Q$-Cartier with index not divisible by the characteristic $p(\mu)$ and that the trace map $\mathrm{Tr}_{X'_{\mu}/X_{\mu}}: {f_{\mu}}_*\sO_{X'_{\mu}} \to \sO_{X_{\mu}}$ is surjective for all closed points $\mu \in \Spec B$. 

By virtue of \cite[Theorem 5.10]{MS}, there exists a dense subset of closed points $W \subseteq \Spec B$ such that for every integer $e \ge 1$ and every $\mu \in W$, the map
\begin{equation*}\tag{$\diamond$}
{\pi_{\mu}}_*F^e_*(\sO_{\widetilde{X}_{\mu}}(K_{\widetilde{X}_{\mu}}+{\pi_{\mu}}^{-1}_*{D'_{+, \mu}}+E_{\mu})) \to {\pi_{\mu}}_*\sO_{\widetilde{X}_{\mu}}(K_{\widetilde{X}_{\mu}}+{\pi_{\mu}}^{-1}_*{D'_{+, \mu}}+E_{\mu}),
\end{equation*}
induced by the canonical dual of the $e$-times iterated Frobenius map $\sO_{\widetilde{X}_{\mu}} \to F^e_* \sO_{\widetilde{X}_{\mu}}$, is surjective. 
Tensoring $(\diamond)$ with $\sO_{X'_{\mu}}(-K_{X'_{\mu}}-D'_{\mu})$, one can see that the map
$$\rho: {\pi_{\mu}}_*F^e_*(\sO_{\widetilde{X}_{\mu}}(M+(1-p(\mu)^e)\pi^*_{\mu}(K_{{X'_{\mu}}}+D'_{\mu}))) \to {\pi_{\mu}}_*\sO_{\widetilde{X}_{\mu}}(M)$$
is surjective, where $M=K_{\widetilde{X_{\mu}}}+{\pi_{\mu}}^{-1}_*{D'_{+,\mu}}-\pi_{\mu}^*(K_{X'_{\mu}}+D'_{\mu})+E_{\mu}$.  
Since $(X', D')$ is log canonical,  $1 \in {\pi_{\mu}}_*\sO_{\widetilde{X}_{\mu}}(M) \subseteq \sO_{X'_{\mu}}(D'_{-, \mu})$. 
By Grothendieck duality, $\rho$ is identified with the evaluation map 
\begin{align*}
F^e_*\sO_{X'_{\mu}}(D'_{-, \mu}) \otimes \mathscr{H}\mathrm{om}_{\sO_{X'_{\mu}}}(F^e_*\sO_{X'_{\mu}}((p(\mu)^e-1)D'_{+,\mu}+D'_{-,\mu}), \sO_{X'_{\mu}}(D'_{-,\mu})) \hspace{4em} \\
\to \sO_{X'_{\mu}}(D'_{-, \mu}).
\end{align*}
The subjectivity of $\rho$ then implies that there exists an $\sO_{X'}$-linear map 
$$\varphi_{X'}:F^e_*\sO_{X'_{\mu}}((p(\mu)^e-1)D'_{+,\mu}+D'_{-,\mu}) \to \sO_{X'_{\mu}}(D'_{-,\mu})$$ such that $1 \in \varphi_{X'}(F^e_*\sO_{X'_{\mu}}(D'_{-, \mu}) )$. 
That is, $(X'_{\mu}, D'_{\mu})$ is sharply $F$-pure in the sense of Definition \ref{non-effective case}. 
Applying Lemma \ref{finite map}, we conclude that $(X_{\mu}, D_{\mu})$ is sharply $F$-pure for all $\mu \in W$. 
\end{proof}

\begin{rem}
Let $Y$ be  an $S2$, $G1$ and seminormal variety over an algebraically closed field $K$ of characteristic zero and $\Gamma$ be an effective $\Q$-Weil divisorial sheaf on $Y$ such that $K_Y+\Gamma$ is $\Q$-Cartier. 
Combining Theorem \ref{implication} with \cite[Corollary 4.4]{MiS}, we can conclude that if Conjecture \ref{MS conj} holds, then the pair $(Y, \Gamma)$ is semi-log canonical if and only if it is of dense sharply $F$-pure type. 
\end{rem}

%%%%%%%%%%%%%%%%%%%%%%%%%%%%%%%%%%%%%%%%%%%%%%%%%%%%%%%%%%%%%%%%%%%%%%%%%%%%%%%%%%%%%%%%%%%%%%%%

\section{Restriction theorem for adjoint ideal sheaves}
In this section, building on an earlier work \cite{Ta2}, we give a new proof of Eisenstein's restriction theorem for adjoint ideal sheaves using test ideal sheaves. 

\begin{defn}\label{lci ideal}
Let $A$ be a smooth variety over an algebraically closed field $K$ of characteristic
zero and $X \subseteq A$ be a normal $\Q$-Gorenstein closed subvariety of codimension $c$. 
Denote by $r$ the Gorenstein index of $X$, that is, the smallest positive integer $m$ such that $m K_X$ is Cartier. 
Then the \textit{l.c.i.~defect ideal sheaf}\footnote{We follow a construction due to Kawakita \cite{Ka}, but our terminology is slightly different from his. We warn the reader that the ideal sheaf called the l.c.i.~defect ideal in \cite{Ka} is different from our $J_X$. 
Also, Ein and Musta\c{t}\u{a} \cite{EM} introduced a very similar ideal, which coincides with our $J_X$ up to integral closure.} $J_X \subseteq \sO_X$ is defined as follows:
since the construction is local, we may consider the germ at a closed point $x \in X \subseteq A$. 
We take generally a closed subscheme $Y$ of $A$ which contains $X$ and is locally a complete intersection (l.c.i.~for short) of codimension $c$.
By Bertini's theorem, $Y$ is the scheme-theoretic union of $X$ and another variety $C^Y$ of codimension $c$. 
Then the closed subscheme $D^Y:=C^Y\big|_X$ of $X$ is a Weil divisor such that $rD^Y$ is Cartier and $\sO_X(rK_X)=\sO_X(-rD^Y)\omega_Y^{\otimes r}$. 
The l.c.i.~defect ideal sheaf $J_X$ is defined by
$$J_X=\sum_Y \sO_X(-rD^Y),$$
where $Y$ runs through all the general l.c.i.~closed subschemes of codimension $c$ containing $X$. 
Note that the support of $J_X$ exactly coincides with the non-l.c.i.~locus of $X$. 
In particular, $J_X=\sO_X$ if and only if $X$ is l.c.i.
The reader is referred to \cite[Section 2]{Ka} and \cite[Section 9.2]{EM} for further properties of l.c.i.~defect ideal sheaves.
\end{defn}

Now we give a new proof of Eisenstein's theorem \cite[Corollary 5.2]{Ei}.
\begin{thm}\label{restriction}
Let $A$ be a smooth variety over an algebraically closed field $K$ of characteristic zero and $Z=\sum_{i=1}^m t_i Z_i$ be a formal combination where the $t_i$ are nonnegative real numbers and the $Z_i$ are proper closed subschemes of $A$. 
If $X$ is a normal $\Q$-Gorenstein closed subvariety of $A$ which is not contained in the support of any $Z_i$, then
$$\J(X,Z\big|_X+\frac{1}{r} V(J_X))=\adj_X(A,Z) \big|_X,$$
where $r$ is the Gorenstein index of $X$ and $J_X$ is the l.c.i.~defect ideal sheaf of $X$.  
\end{thm}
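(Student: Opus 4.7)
The plan is to reduce to characteristic $p \gg 0$ and then run a Fedder/Schwede-type computation on an affine chart. Fix a model of $(A, X, Z, J_X)$ over a finitely generated $\Z$-subalgebra $B \subseteq K$. Theorem \ref{mult thm}, applied to the normal $\Q$-Gorenstein pair $(X, Z|_X + \tfrac{1}{r}V(J_X))$, identifies the left-hand side, at every closed $\mu$ in a dense open subset of $\Spec B$, with $\widetilde{\tau}(X_\mu, Z_\mu|_{X_\mu} + \tfrac{1}{r}V(J_{X_\mu}))$. The partial adjoint/test ideal correspondence from \cite{Ta2} reduces the original identity to the characteristic $p$ statement
$$\widetilde{\tau}_{X_\mu}(A_\mu, Z_\mu)\big|_{X_\mu} = \widetilde{\tau}\bigl(X_\mu, Z_\mu|_{X_\mu} + \tfrac{1}{r}V(J_{X_\mu})\bigr), \qquad (\star)$$
so I drop the subscript $\mu$ and work on an affine chart $A=\Spec S$ with $X=V(I)$, fixing $e_0 \ge 1$ such that $(p^{e_0}-1)K_A$ and $r(p^{e_0}-1)K_X$ are both Cartier and $r \mid p^{e_0}-1$.

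I would then expand both sides of $(\star)$ using the explicit ``smallest ideal" description in the proof of Proposition-Definition \ref{test ideal def}: the left-hand side is the sum, for $n \ge 0$, of the images under $\varphi_{A, ne_0}$ of the ideals $\gamma\,I^{c(p^{ne_0}-1)}\prod_i \mathcal{I}_{Z_i}^{\lceil t_i(p^{ne_0}-1)\rceil}\sO_A((1-p^{ne_0})K_A)$, taken modulo $I$, while the right-hand side is an analogous sum on $X$ with the Frobenius trace $\varphi_{X, ne_0}$ of $X$ and the extra factor $J_X^{(p^{ne_0}-1)/r}$. To identify them, Lemma \ref{Fedder}(1) converts any element of $(\mathcal{I}_Y^{[p^{ne_0}]}:\mathcal{I}_Y)$, for a general codimension-$c$ l.c.i.\ $Y \supseteq X$, into a generator of $\Hom(F^{ne_0}_*\sO_Y, \sO_Y)$ via pre-composition with $\varphi_{A, ne_0}$. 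The identity $\sO_X(rK_X) = \omega_Y^{\otimes r}\otimes \sO_X(-rD^Y)$ from Definition \ref{lci ideal} then converts (after an $r$-fold iteration) the $Y$-trace into $\varphi_{X, ne_0}$ up to a twist by $\sO_X(-(p^{ne_0}-1)D^Y)$.

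Summing over all general l.c.i.\ $Y$, the ideals $\sO_X(-rD^Y)$ span $J_X$ by definition of the l.c.i.\ defect ideal, so their $(p^{ne_0}-1)/r$-th powers span $J_X^{(p^{ne_0}-1)/r}$; this is precisely the extra factor appearing on the $X$-side. Plugging back into the sum description and quoting the uniqueness statement of Proposition-Definition \ref{test ideal def} for the test ideal with boundary $\tfrac{1}{r}V(J_X)$ then gives $(\star)$.

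The core obstacle is this last matching: one must verify that the sum of contributions from general l.c.i.\ subschemes $Y \supseteq X$ produces \emph{exactly} $J_X^{(p^{ne_0}-1)/r}$, rather than a proper sub- or super-ideal, and that the rounding in $\lceil \tfrac{1}{r}(p^{ne_0}-1)\rceil$, the bookkeeping of the Gorenstein index $r$, and the twists from Fedder's lemma all remain compatible across the two sides. This is the place where the original arguments of \cite{Fe} and \cite{Sc2} (which treat the Gorenstein, l.c.i.\ case) have to be adapted carefully to the $\Q$-Gorenstein, non-l.c.i.\ setting; once it is carried out, both sides of $(\star)$ are realized as the smallest ideal satisfying a single explicit $\varphi_{X, ne_0}$-stability condition, and equality follows.
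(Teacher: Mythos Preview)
Your reduction to the characteristic-$p$ equality $(\star)$ is not justified by the partial correspondence of \cite{Ta2}: that result only gives the inclusion $\widetilde{\tau}_{X_\mu}(A_\mu,Z_\mu)\subseteq \adj_X(A,Z)_\mu$, so even if $(\star)$ held you would only obtain $\J(X,Z|_X+\tfrac{1}{r}V(J_X))_\mu=\widetilde{\tau}_{X_\mu}(A_\mu,Z_\mu)|_{X_\mu}\subseteq \adj_X(A,Z)_\mu|_{X_\mu}$, not the reverse. The full correspondence $\adj_X(A,Z)_\mu=\widetilde{\tau}_{X_\mu}(A_\mu,Z_\mu)$ is Corollary \ref{correspondence}, which is a \emph{consequence} of the theorem you are trying to prove, so invoking it here is circular. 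The paper handles the inclusion $\supseteq$ entirely in characteristic zero, via \cite[Remark 8.5]{EM} and \cite[Lemma 1.7]{Ta2}.

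More seriously, the step you yourself flag as the ``core obstacle'' is in fact false as stated: from $J_X=\sum_Y \sO_X(-rD^Y)$ it does \emph{not} follow that $J_X^{(q-1)/r}=\sum_Y \sO_X(-rD^Y)^{(q-1)/r}$. If $J=(a)+(b)$ then $J^m$ is generated by all mixed monomials $a^ib^{m-i}$, not just $a^m$ and $b^m$; the pure powers of the $g_Y$ give only a subideal of $J_X^{(q-1)/r}$, and there is no mechanism in your argument producing the cross-terms. The paper avoids this completely: rather than summing over all $Y$, it uses a characteristic-zero identity $\J(X,Z|_X+\tfrac{1}{r}V(J_X))=\adj_{D^Y}(X,Z|_X)$ for a \emph{single} general l.c.i.\ $Y$ (the equality $(\star)$ in the proof), which replaces the l.c.i.\ defect ideal by one principal Cartier divisor $rD^Y$. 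After reduction mod $p$ only the \emph{inclusion} $\widetilde{\tau}_{D^Y_\mu}(X_\mu,Z_\mu|_{X_\mu})\subseteq \widetilde{\tau}_{X_\mu}(A_\mu,Z_\mu)|_{X_\mu}$ is needed, and that follows from the Fedder-type identity $g^{(q-1)/r}(I^{[q]}:I)\equiv (f_1\cdots f_c)^{q-1}\bmod I^{[q]}$ together with the minimality characterization of $\widetilde{\tau}_{D^Y_\mu}$.
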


\begin{proof}
The proof is a refinement of the proof of \cite[Theorem 3.1]{Ta2}. 
The inclusion $\J(X,Z\big|_X+\frac{1}{r}  V(J_X)) \supseteq \adj_X(A,Z) \big|_X$ follows from a combination of \cite[Remark 8.5]{EM} and \cite[Lemma 1.7]{Ta2}. 
Hence we will prove the converse inclusion. 

Since the question is local, we consider the germ at a closed point $x \in X \cap \bigcap_{i=1}^m Z_i \subset A$. 
Denote by $c$ the codimension of $X$ in $A$. Take generally a subscheme $Y$ of $A$ which contains $X$ and is l.c.i.~of codimension $c$, so $Y$ is the scheme-theoretic union of $X$ and a variety $C^Y$. 
Then $D^Y:=C^Y\big|_ X$ is a Weil divisor on $X$ such that $rD^Y$ is Cartier. 
By a general choice of $Y$, one has 
\begin{equation*}\tag{$\star$}
\J(X,Z\big|_X+\frac{1}{r}  V(J_X))=\adj_{D^Y}(X, Z \big|_X),
\end{equation*}
(which follows from an argument similar to the claim in the proof of \cite[Theorem 3.1]{Ta2}). 
Therefore, it is enough to show that 
$$\adj_{D^Y}(X, Z \big|_X) \subseteq \adj_X(A,Z) \big|_X.$$

By Theorem \ref{mult thm} and \cite[Theorem 2.7]{Ta2}, in order to prove this inclusion, it suffices to show that given any model of $(A, X, Y, Z, C^Y, D^Y)$ over a finitely generated $\Z$-subalgebra $B$ of $K$, one has 
\begin{equation*}\tag{$\star\star$}
\widetilde{\tau}_{D^Y_{\mu}}(X_{\mu}, Z_{\mu} \big|_{X_{\mu}}) \subseteq \widetilde{\tau}_{X_{\mu}}(A_{\mu},Z_{\mu}) \big|_{X_{\mu}}
\end{equation*}
for general closed points $\mu \in \Spec B$. 
Since $\mu$ is a general point of $\Spec B$ and the formation of test ideal sheaves commutes with localization, we may assume that $\sO_{A_{\mu}}$ is an $F$-finite regular local ring of characteristic $p=p(\mu)>r$, $X_{\mu}=V(I)$ is a normal $\Q$-Gorenstein closed subscheme of $A_{\mu}$ with Gorenstein index $r$ and $Y_{\mu}=V((f_1, \dots, f_c))$ is a complete intersection closed subscheme of codimension $c$ containing $X_{\mu}$. 
We may assume in addition that  
$D^Y_{\mu}$ is a Weil divisor on $X_{\mu}$ such that $rD^Y_{\mu}$ is Cartier and $\sO_{X_{\mu}}(rK_{X_{\mu}})=\sO_{X_{\mu}}(-rD^Y_{\mu})\omega_{Y_{\mu}}^{\otimes r}$. 
We take a germ $g \in \sO_{A_{\mu}}$ whose image $\overline{g}$ is the local equation of $r D^Y_{\mu}$ on $\sO_{X_{\mu}}$. 
Let $\ba_i \subseteq \sO_{A_{\mu}}$ be the defining ideal of $Z_{i, \mu}$ for each $i=1, \dots, m$. 
Fix an integer $e_0  \ge 1$ such that $p^{e_0}-1$ is divisible by $r$ and set $q_0=p^{e_0}$. 

\begin{cl} 
For all powers $q=q_0^n$ of $q_0$, one has 
$$g^{(q-1)/r}(I^{[q]}:I)= (f_1 \cdots f_c)^{q-1} \textup{ in $\sO_{A_{\mu}}/I^{[q]}$.}$$
\end{cl}
\begin{proof}[Proof of Claim]
Since $q-1$ is divisible by $r$, 
\begin{align*}
\sO_{X_{\mu}}((1-q)(K_{X_{\mu}}+D^Y_{\mu}))&=\sO_{Y_{\mu}}\left((1-q)K_{Y_{\mu}}\right)\big|_{X_{\mu}}\\
&=\sO_{A_{\mu}}((1-q)(K_{A_{\mu}}+\sum^c_{i=1}\mathrm{div}_{A_{\mu}}(f_i)))\big|_{X_{\mu}}.
\end{align*}
Set $e=ne_0$. By making use of Grothendieck duality, this implies that the natural map of $F^e_*\sO_{A_{\mu}}$-modules
$$\mathrm{Hom}_{\sO_{A_{\mu}}}(F^e_*\sO_{A_{\mu}}((q-1)\sum_{i=1}^c \mathrm{div}_{A_{\mu}}(f_i)), \sO_{A_{\mu}}) \to \mathrm{Hom}_{\sO_{X_{\mu}}}\left(F^e_*\sO_{X_{\mu}}((q-1)D^Y_{\mu}), \sO_{X_{\mu}}\right)$$
induced by restriction is surjective. 
It then follows from Lemma \ref{Fedder} (1) that the $\sO_{A_{\mu}}$-linear map
$$(f_1 \cdots f_c)^{q-1}\sO_{A_{\mu}} \to \frac{{g}^{(q-1)/{r}}(I^{[q]}:I)}{I^{[q]}}$$
induced by the natural quotient map $\sO_{A_{\mu}} \to \sO_{A_{\mu}}/I^{[q]}$ is surjective. 
Thus, we obtain the assertion. 
\end{proof}

Let $\varphi_{X_{\mu}, e_0}: F^{ne_0}_*\sO_{X_{\mu}}  \to \sO_{X_{\mu}}$ be a generator for the rank-one free $F^{ne_0}_*\sO_{X_{\mu}}$-module $\mathrm{Hom}_{\sO_{X_{\mu}}}(F^{ne_0}_*\sO_{X_{\mu}}, \sO_{X_{\mu}})$. 
Then $\widetilde{\tau}_{D^Y_{\mu}}(X_{\mu}, Z_{\mu} \big|_{X_{\mu}})$ is the unique smallest ideal $J$ whose support does not contain any component of $D^Y_{\mu}$ and which satisfies 
$$\varphi_{X_{\mu}, ne_0}(F^{ne_0}_*(Jg^{(q_0^n-1)/r}\ba_1^{\lceil t_1(q_0^{n}-1) \rceil} \cdots \ba_m^{\lceil t_m(q_0^{n}-1) \rceil})) \subseteq J$$
for all integers $n \ge 1$. 
By Lemma \ref{Fedder} (1), there exist an $\sO_{A_{\mu}}$-linear map $\varphi_{A_{\mu}, ne_0}: F^{ne_0}_*\sO_{A_{\mu}}  \to \sO_{A_{\mu}}$ and a germ $h_n \in \sO_{A_{\mu}}$ whose image is a generator for the cyclic $\sO_{X_{\mu}}$-module $(I^{[q_0^n]}:I)/I^{[q_0]}$ such that we have the following commutative diagram:
$$
\xymatrix{
F^{ne_0}_*\sO_{A_{\mu}} \ar[d] \ar[rrr]^{\varphi_{A_{\mu}, ne_0} \circ F^{ne_0}_*h_n } & & & \sO_{A_{\mu}} \ar[d]\\
F^{ne_0}_*\sO_{X_{\mu}} \ar[rrr]^{\varphi_{X_{\mu}, ne_0}} & & & \sO_{X_{\mu}},
}
$$
where the vertical maps are natural quotient maps. 
By the definition of $\widetilde{\tau}_{X_{\mu}}(A_{\mu},Z_{\mu})$, one has 
$$\varphi_{A_{\mu}, ne_0}(F^{ne_0}_*(\widetilde{\tau}_{X_{\mu}}(A_{\mu},Z_{\mu})I^{c(q_0^n-1)} \ba_1^{\lceil t_1(q_0^{n}-1) \rceil} \cdots \ba_m^{\lceil t_m(q_0^{n}-1) \rceil})) \subseteq \widetilde{\tau}_{X_{\mu}}(A_{\mu},Z_{\mu}).$$
Since $g^{(q_0^{n}-1)/r}  h_n \in I^{c(q_0^n-1)}+I^{[q_0^n]}$ by the above claim, 
$$\varphi_{A_{\mu}, ne_0}(F^{ne_0}_*(\widetilde{\tau}_{X_{\mu}}(A_{\mu},Z_{\mu})g^{(q_0^{n}-1)/r}  h_n \ba_1^{\lceil t_1(q_0^{n}-1) \rceil} \cdots \ba_m^{\lceil t_m(q_0^{n}-1) \rceil})) \subseteq \widetilde{\tau}_{X_{\mu}}(A_{\mu},Z_{\mu})+I.$$
It then follows from the commutativity of the above diagram that 
$$\varphi_{X_{\mu}, ne_0}(F^{ne_0}_*(\widetilde{\tau}_{X_{\mu}}(A_{\mu},Z_{\mu})  \big|_{X_{\mu}} \overline{g}^{(q_0^{n}-1)/r} \overline{\ba_1}^{\lceil t_1(q_0^{n}-1) \rceil} \cdots \overline{\ba_m}^{\lceil t_m(q_0^{n}-1) \rceil})) \subseteq \widetilde{\tau}_{X_{\mu}}(A_{\mu},Z_{\mu}) \big|_{X_{\mu}},$$
where $\overline{\ba_i}$ is the image of $\ba_i$ in $\sO_{X_{\mu}}$ for each $i=1, \dots, m$. 

On the other hand, note that $\ba_1^{\lceil t_1 \rceil} \cdots \ba_m^{\lceil t_m \rceil} \widetilde{\tau}_{X_{\mu}}(A_{\mu}) \subseteq \widetilde{\tau}_{X_{\mu}}(A_{\mu},Z_{\mu})$. 
By \cite[Example 2.6]{Ta2}, the support of $\widetilde{\tau}_{X_{\mu}}(A_{\mu})$ is contained in the singular locus of $X_{\mu}$, which does not contain any component of $D^Y_{\mu}$ because $X_{\mu}$ is normal. 
Also, by a general choice of  $Y$, we may assume that no component of $D^Y_{\mu}$ is contained in the support of $Z_{i, \mu}$ for all $i=1, \dots, m$. 
Thus, the support of $\widetilde{\tau}_{X_{\mu}}(A_{\mu},Z_{\mu}) \big|_{X_{\mu}}$ does not contain any component of $D^Y_{\mu}$.  
By the minimality of $\widetilde{\tau}_{D^Y_{\mu}}(X_{\mu}, Z_{\mu} \big|_{X_{\mu}})$, we conclude that $\widetilde{\tau}_{D^Y_{\mu}}(X_{\mu}, Z_{\mu} \big|_{X_{\mu}}) \subseteq \widetilde{\tau}_{X_{\mu}}(A_{\mu},Z_{\mu}) \big|_{X_{\mu}}$. 
\end{proof}

\begin{rem}\label{lc rem}
Let the notation be as in Theorem \ref{restriction} and fix an arbitrary point $x \in X$. 
Employing the same strategy as the proof of \cite[Theorem]{Ka2}, we can use Theorem \ref{restriction} to prove that the pair $(X; Z\big|_X+\frac{1}{r}  V(J_X))$ is log canonical at $x$ if and only if so is $(A; c X +Z)$.
This result is a special case of \cite[Theorem 1.1]{Ka} and \cite[Theorem 1.1]{EM}, but our proof does not depend on the theory of jet schemes. 
\end{rem}

As a corollary, we prove the conjecture proposed in \cite[Conjecture 2.8]{Ta2} when $X$ is normal and $\Q$-Gorenstein. 
\begin{cor}\label{correspondence} 
Let $A$ be a smooth variety over an algebraically closed field $K$ of characteristic zero 
and $X \subseteq A$ be a normal $\Q$-Gorenstein closed subvariety of $A$. 
Let $Z=\sum_{i=1}^m t_i Z_i$ be a formal combination where the $t_i$ are nonnegative real numbers and the $Z_i \subseteq A$ are proper closed subschemes which do not contain $X$ in their support. 
Given any model of $(A, X, Z)$ over a finitely generated $\Z$-subalgebra $B$ of $K$, 
there exists a dense open subset $W \subseteq \Spec B$ such that  
$$\adj_X(A,Z)_{\mu}=\widetilde{\tau}_{X_{\mu}}(A_{\mu},Z_{\mu})$$
for every closed point $\mu \in W$. 
In particular, the pair $(A; Z)$ is plt along $X$ if and only if it is of purely $F$-regular type along $X$. 
\end{cor}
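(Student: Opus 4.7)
The plan is to reduce the claim to a comparison after restriction to $X_\mu$, using Theorem~\ref{restriction} together with Theorem~\ref{mult thm}, and then to bootstrap to equality of ideals on all of $A_\mu$.

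First I would apply Theorem~\ref{restriction} in characteristic zero to rewrite
\[
\adj_X(A,Z)\big|_X \;=\; \J\!\Bigl(X,\,Z\big|_X + \tfrac{1}{r}V(J_X)\Bigr),
\]
and then invoke Theorem~\ref{mult thm} on the normal $\Q$-Gorenstein variety $X$ (with trivial boundary divisor and formal combination $Z\big|_X + \tfrac{1}{r}V(J_X)$) to produce a dense open $W_1 \subseteq \Spec B$ on which
\[
\J\!\Bigl(X,\,Z\big|_X + \tfrac{1}{r}V(J_X)\Bigr)_\mu \;=\; \widetilde{\tau}\!\Bigl(X_\mu,\,Z_\mu\big|_{X_\mu} + \tfrac{1}{r}V(J_{X_\mu})\Bigr).
\]
Combining this with the $(\star)$ and $(\star\star)$ identifications inside the proof of Theorem~\ref{restriction}, applied to a general complete intersection $Y \supseteq X$ of codimension $c$ with residual divisor $D^Y$, this latter test ideal should coincide with $\widetilde{\tau}_{X_\mu}(A_\mu,Z_\mu)\big|_{X_\mu}$. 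Hence on a dense open $W_2 \subseteq W_1$,
\[
\adj_X(A,Z)_\mu\big|_{X_\mu} \;=\; \widetilde{\tau}_{X_\mu}(A_\mu,Z_\mu)\big|_{X_\mu}.
\]

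Next I would compare the two ideals on the complement $A_\mu \setminus X_\mu$. The ``adjoint correction'' $-c\,\pi^{-1}(X)+g^{-1}_*E$ in Definition~\ref{adjoint ideal} and the $\mathcal{I}_X$-power factor in Proposition-Definition~\ref{test ideal def} are both supported over $X$, so away from $X_\mu$ the two ideals specialize to the usual multiplier ideal $\J(A,Z)_\mu$ and the usual test ideal $\widetilde{\tau}(A_\mu,Z_\mu)$. These coincide on a further dense open by Theorem~\ref{mult thm} applied to $(A,Z)$.

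The final step, and the main obstacle, is to glue these two comparisons --- agreement modulo $\mathcal{I}_{X_\mu}$ and agreement on $A_\mu \setminus X_\mu$ --- into an equality of ideals on all of $A_\mu$. I would attempt this by establishing short exact sequences of the form
\[
0 \to \J(A,Z) \to \adj_X(A,Z) \to \adj_X(A,Z)\big|_X \to 0
\]
and its test-ideal analogue, both compatible with reduction modulo $p$, so that each middle term is pinned down by the two outer terms. Verifying these sequences, in particular the surjectivity of the restriction maps and their compatibility with specialization, should be possible via Grothendieck duality together with the vanishing inputs used in the proof of Theorem~\ref{restriction}, but this is the most delicate part of the argument. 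The consequence for plt versus purely $F$-regular then follows immediately, since both conditions amount to the relevant ideal being the whole structure sheaf.
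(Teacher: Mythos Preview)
Your overall strategy---compare after restriction to $X_\mu$ and compare the parts lying in $\mathcal{I}_{X_\mu}$, then glue---is exactly the shape of the paper's argument, but two concrete points are off. First, the kernel of the restriction $\adj_X(A,Z)\to\adj_X(A,Z)\big|_X$ is not $\J(A,Z)$ but $\J(A,\,cX+Z)$; this is Eisenstein's result \cite[Theorem~5.1]{Ei}, and it is what the paper invokes. Knowing that the two ideals agree on the open set $A_\mu\setminus X_\mu$ tells you nothing about which ideal sits inside $\mathcal{I}_{X_\mu}$ as the kernel. Second, and more seriously, even with the correct kernels the abstract gluing fails: two ideals $I,J\subseteq\sO_{A_\mu}$ can satisfy $I\cap\mathcal{I}_{X_\mu}=J\cap\mathcal{I}_{X_\mu}$ and $I\big|_{X_\mu}=J\big|_{X_\mu}$ without being equal (e.g.\ $I=(x^2,y)$ and $J=(x^2,x+y)$ in $k[x,y]$ with $\mathcal{I}_X=(x)$). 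What rescues the argument is the \emph{a priori} containment $\widetilde{\tau}_{X_\mu}(A_\mu,Z_\mu)\subseteq\adj_X(A,Z)_\mu$ from \cite[Theorem~2.7]{Ta2}, which you never invoke.

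The paper proceeds exactly this way: one inclusion is \cite[Theorem~2.7]{Ta2}; for the other, $(\star)$ and $(\star\star)$ give only $\adj_X(A,Z)_\mu\big|_{X_\mu}\subseteq\widetilde{\tau}_{X_\mu}(A_\mu,Z_\mu)\big|_{X_\mu}$ (note $(\star\star)$ is an inclusion, not an equality), i.e.\ $\adj_X(A,Z)_\mu\subseteq\widetilde{\tau}_{X_\mu}(A_\mu,Z_\mu)+\mathcal{I}_{X_\mu}$, while \cite[Theorem~5.1]{Ei} plus Theorem~\ref{mult thm} give $\adj_X(A,Z)_\mu\cap\mathcal{I}_{X_\mu}=\J(A,cX+Z)_\mu=\widetilde{\tau}(A_\mu,cX_\mu+Z_\mu)\subseteq\widetilde{\tau}_{X_\mu}(A_\mu,Z_\mu)$. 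Now given $a\in\adj_X(A,Z)_\mu$, write $a=t+i$ with $t\in\widetilde{\tau}_{X_\mu}(A_\mu,Z_\mu)$ and $i\in\mathcal{I}_{X_\mu}$; the a priori containment forces $t\in\adj_X(A,Z)_\mu$, hence $a-t\in\adj_X(A,Z)_\mu\cap\mathcal{I}_{X_\mu}\subseteq\widetilde{\tau}_{X_\mu}(A_\mu,Z_\mu)$, so $a\in\widetilde{\tau}_{X_\mu}(A_\mu,Z_\mu)$. Your proposal is salvageable once you insert these two missing inputs, and then it becomes essentially the paper's proof.
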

\begin{proof}
Let $r$ be the Gorenstein index of $X$ and $J_X \subseteq \sO_X$ be the l.c.i.~defect ideal sheaf  of $X$. 
Let $(A_B, X_B, Z_B, J_{X, B})$ be any model of $(A, X, Z, J_X)$ over a finitely generated $\Z$-subalgebra $B$ of $K$. 
By \cite[Theorem 2.7]{Ta2},  there exists a dense open subset $W \subseteq \Spec B$ such that 
$$\widetilde{\tau}_{X_{\mu}}(A_{\mu}, Z_{\mu}) \subseteq \adj_X(A,Z)_{\mu}$$
for all closed points $\mu \in W$.
Therefore, we will prove the reverse inclusion. 
 
As an application of Theorem \ref{mult thm} to $(\star)$ and $(\star\star)$ in the proof of Theorem \ref{restriction}, after replacing $W$ by a smaller dense open subset if necessary, we may assume that 
$$\adj_X(A,Z)_{\mu}\big|_{X_{\mu}}=\J(X, Z \big|_X+\frac{1}{r}  V(J_X))_{\mu} 
\subseteq \widetilde{\tau}_{X_{\mu}}(A_{\mu},Z_{\mu})\big|_{X_{\mu}},
$$
that is, 
$$\adj_X(A,Z)_{\mu} \subseteq \widetilde{\tau}_{X_{\mu}}(A_{\mu},Z_{\mu})+\mathcal{I}_{X_{\mu}}$$
for all closed points $\mu \in W$. 
It, however, follows from Theorem \ref{mult thm} and \cite[Theorem 5.1]{Ei} that we may assume that 
\begin{align*}
\adj_X(A,Z)_{\mu} \cap \mathcal{I}_{X_{\mu}}=\J(A, cX+Z)_{\mu}&=\widetilde{\tau}(A_{\mu}, cX_{\mu}+Z_{\mu}) \\
&\subseteq \widetilde{\tau}_{X_{\mu}}(A_{\mu},Z_{\mu})
\end{align*}
for all closed points $\mu \in W$.
Thus, $\adj_X(A,Z)_{\mu} \subseteq \widetilde{\tau}_{X_{\mu}}(A_{\mu},Z_{\mu})$ for all closed points $\mu \in W$.
\end{proof}

%%%%%%%%%%%%%%%%%%%%%%%%%%%%%%%%%%%%%%%%%%%%%%%%%%%%%%%%%%%%%%%%%%%%%%%%%%%%%%%%%%%%%%%%%%%%%%%%

\section{The correspondence of log canonicity and F-purity when the defining equations are very general}
Using the argument developed in the previous section and involving the l.c.i.~defect ideal sheaf,  we will show that Conjecture \ref{lc conj} holds true if the defining equations of the variety are very general. 
The following result is a generalization of a result of Hern\'andez \cite{He} to the singular case.  
\begin{thm}\label{very general}
Let $\mathbb{A}^n_K=\Spec K[x_1, \dots, x_n]$ be the affine $n$-space over an algebraically closed field $K$ of characteristic zero and $X \subseteq \mathbb{A}^n_K$ be a normal $\Q$-Gorenstein closed subvariety of codimension $c$ passing through the origin $0$. 
Let $r$ denote the Gorenstein index of $X$ and $J_X$ denote the l.c.i.~defect ideal of $X$.
Let $\ba \subseteq \sO_X$ be a nonzero ideal and $t > 0$ be a real number. 
Suppose that there exist a system of generators $h_1, \dots, h_l$ for the defining ideal $\mathcal{I}_X$ of $X$  and a system of generators $h_{l+1}, \dots, h_{\nu}$ for $\ba$ with the following property: for each $i=1, \dots, \nu$, we can write 
$$h_i=\sum_{j=1}^{\rho_i}\gamma_{ij}x_1^{\alpha^{(1)}_{ij}} \cdots x_n^{\alpha^{(n)}_{ij}} \in K[x_1, \dots, x_n]  
\; \left((\alpha^{(1)}_{ij}, \dots, \alpha^{(n)}_{ij}) \in \Z_{\ge 0}^n \setminus \{\mathbf{0}\}, \; \gamma_{ij} \in K^* \right),$$
where $\gamma_{i1}, \dots, \gamma_{i\rho_i}$ are algebraically independent over $\Q$. 
Then $(X; t V(\ba)+\frac{1}{r} V(J_X))$ is log canonical at $0$ if and only if it is of dense sharply $F$-pure type at $0$. 
\end{thm}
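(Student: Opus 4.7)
The direction ``dense sharply $F$-pure type implies log canonical'' is Remark \ref{F-pure=>lc}. For the converse, assume $(X; tV(\ba)+\frac{1}{r}V(J_X))$ is log canonical at $0$, and produce a dense set of primes $p$ for which its modulo $p$ reduction is sharply $F$-pure at $0$. The plan is to use the techniques developed in the proof of Theorem \ref{restriction} to transfer both hypothesis and conclusion into statements on the ambient $\mathbb{A}^n$ involving only Cartier divisors whose coefficients are very general, and then to apply Hern\'andez's method \cite[Theorem 5.16]{He} in that setting.

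Choose generic $K$-linear combinations $f_1,\ldots,f_c$ of $h_1,\ldots,h_l$, so that $Y:=V(f_1,\ldots,f_c)\supset X$ is a complete intersection of codimension $c$; write $Y=X\cup C^Y$, set $D^Y:=C^Y|_X$, and let $g\in\sO_{\mathbb{A}^n}$ be a lift of a local equation $\bar g\in\sO_X$ of $rD^Y$. By the Bertini-type argument in the proof of \cite[Theorem 3.1]{Ta2} (as used in the proof of Theorem \ref{restriction}), $\frac{1}{r}V(J_X)$ can be replaced by $D^Y$ at $0$ for the purposes of both log canonicity and sharp $F$-purity. Combining Lemma \ref{Fedder} with the key identity
\[
g^{(q-1)/r}(\mathcal{I}_{X_\mu}^{[q]}:\mathcal{I}_{X_\mu})\equiv (f_1\cdots f_c)^{q-1}\pmod{\mathcal{I}_{X_\mu}^{[q]}}
\]
from the claim in the proof of Theorem \ref{restriction}, sharp $F$-purity of $(X_\mu;tV(\ba)|_{X_\mu}+\frac{1}{r}V(J_{X_\mu}))$ at $0$ becomes equivalent to
\[
(f_1\cdots f_c)^{p^e-1}\cdot\ba^{\lceil t(p^e-1)\rceil}_\mu\not\subseteq\m^{[p^e]}
\]
for some $e\ge 1$, which is precisely the Fedder criterion for sharp $F$-purity of $(\mathbb{A}^n_\mu;\sum_{i=1}^c V(f_i)_\mu+tV(\ba)_\mu)$ at $0$. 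A parallel translation on the log-canonicity side, using Remark \ref{lc rem} and the genericity of $Y$ (so that $\sum\mathrm{div}(f_i)$ decomposes as $cX$ plus a residual divisor whose contribution at $0$ is captured by $\frac{1}{r}V(J_X)$), identifies the hypothesis with log canonicity of $(\mathbb{A}^n;\sum V(f_i)+tV(\ba))$ at $0$.

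The problem thus reduces to the following: when the monomial coefficients of $f_1,\ldots,f_c,h_{l+1},\ldots,h_\nu$ are algebraically independent over $\Q$ (achievable by choosing the combining scalars of the $f_i$ generically over the given independence of the $\gamma_{ij}$), log canonicity of $(\mathbb{A}^n;\sum V(f_i)+tV(\ba))$ at $0$ implies dense sharply $F$-pure type at $0$. I would finish by adapting \cite[Theorem 5.16]{He}: expand $(f_1\cdots f_c)^{p^e-1}\ba^{\lceil t(p^e-1)\rceil}$ in the monomial basis and exploit the algebraic independence of the coefficients to preclude cancellations of monomials coming from distinct multi-indices. The Fedder criterion then becomes a purely combinatorial condition on the Newton polytopes of the $h_i$'s, equivalent to the combinatorial formulation of log canonicity and independent of $p$ for $p\gg 0$, so it holds on a dense subset of primes.

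The main obstacle is this last step. Hern\'andez treats a single hypersurface with algebraically independent coefficients, whereas here the relevant ideal $(f_1\cdots f_c)^{p^e-1}\ba^{\lceil t(p^e-1)\rceil}$ intertwines a product of $c$ generic linear combinations of the $h_i$'s (not direct generators with independent coefficients) with powers of the $\ba$-generators. Locating an explicit monomial of multidegree strictly less than $(p^e,\ldots,p^e)$ in the expansion, whose coefficient is a nonzero polynomial in the $\gamma_{ij}$'s and the generic combining scalars, requires a more involved Newton-polytope bookkeeping, in the spirit of the arguments used for cases (ii) and (iv) of Remark \ref{known cases} but adapted to the mixed product structure.
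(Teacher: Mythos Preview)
Your reduction is essentially the paper's: replace $\tfrac{1}{r}V(J_X)$ by $D^Y$ via a general l.c.i.~$Y=V(f_1,\dots,f_c)$, use the identity $g^{(q-1)/r}(\mathcal I_X^{[q]}:\mathcal I_X)\equiv (f_1\cdots f_c)^{q-1}$ mod $\mathcal I_X^{[q]}$ together with Lemma~\ref{Fedder} to transport the Fedder criterion to $\mathbb A^n$, and use inversion of adjunction (Remark~\ref{lc rem}) to transport log canonicity. Two small corrections: you state an \emph{equivalence} between $F$-purity of $(X_\mu;\dots+\tfrac1rV(J_{X_\mu}))$ and the ambient Fedder condition, but only the implication you actually need (ambient $\Rightarrow$ $X$) follows, since $(g_\mu)\subseteq J_{X,\mu}$ gives only one inclusion; and you should first reduce to $t\in\Q$ via rationality of the log canonical threshold, as the paper does.

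The genuine gap is the part you flag yourself. Saying ``adapt Hern\'andez'' and invoking Newton polytopes is not enough: one must produce, for infinitely many $p$, an explicit monomial of multidegree $<(p,\dots,p)$ appearing in $(f_1\cdots f_c)^{p-1}f_{c+1}^{\lambda_{c+1}(p-1)}\cdots f_s^{\lambda_s(p-1)}$ with coefficient nonzero in $\kappa(\mu)$. The paper supplies exactly the missing mechanism. First, the summation formula for multiplier ideals \cite[Theorem~3.2]{Ta} replaces $tV(\ba)$ by a combination $\sum_{j>c}\lambda_j\,\mathrm{div}(f_j)$ with $\sum\lambda_j=t(1-\epsilon)$ while preserving klt-ness; this step is what lets you pass from the ideal $\ba$ to individual hypersurfaces, and it has no analogue in \cite{He}. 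Second, passing to term ideals and applying Howald's theorem turns log canonicity into the existence of a rational vector $\sigma=(\sigma_{ij})$ with $\widetilde A\sigma^{\mathrm T}\le\mathbf 1$, $\sum_j\sigma_{ij}=1$ for $i\le c$, and $\sum_{i>c}\sum_j\sigma_{ij}=t$ (after a limit $\epsilon\to 0$). Third, for $p\equiv 1\pmod{Nr}$ (with $N$ clearing denominators of $\sigma$), the monomial $x^{(p-1)A\sigma^{\mathrm T}}$ lies outside $\mathfrak m^{[p]}$, and its coefficient contains the term $\prod_i\binom{(p-1)\sum_j\sigma_{ij}}{(p-1)\sigma_{i1},\dots,(p-1)\sigma_{im_i}}\prod_{i,j}u_{ij}^{(p-1)\sigma_{ij}}$, whose multinomial factor is a $p$-adic unit because each row sum is $\le p-1$. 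Algebraic independence of the $u_{ij}$ over $\Q$ then guarantees the full coefficient $\theta_{\sigma,p}(\mathbf u)$ is nonzero in $B/pB$ on a dense open, giving the required dense $W\subseteq\Spec B$. Without these three ingredients---the summation formula, Howald's polytope translation, and the explicit multinomial witness---your outlined ``Newton-polytope bookkeeping'' has no foothold, and the argument as written does not close.
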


\begin{rem}
Note by the definition of $J_X$ that $X$ is l.c.i. if and only if $J_X=\sO_X$. 
Thus, if $X=\Spec K[x_1, \dots, x_n]/(h_1, \dots, h_l)$ is a normal complete intersection variety, $\ba \subseteq \sO_X$ is the image of the ideal generated by $h_{l+1}, \cdots, h_{\nu}$, and the $h_i \in K[x_1, \dots, x_n]$ satisfy the same property as that in Theorem \ref{very general},  then Theorem \ref{very general} says that $(X, t V(\ba))$ is log canonical at $0$ if and only if it is of dense sharply $F$-pure type. 
\end{rem}

\begin{proof}
By Remark \ref{F-pure=>lc}, it suffices to show that if $(X; t V(\ba)+\frac{1}{r} V(J_X))$ is log canonical at $0$, then it is of dense sharply F-pure type.

Suppose that $(X; t  V(\ba)+\frac{1}{r}  V(J_X))$ is log canonical at $0$. 
Since the log canonical threshold $\lct_0((X; \frac{1}{r} V(J_X)); V(\ba))$ is a rational number, we may assume that $t$ is a rational number. 
Take a sufficiently general complete intersection closed subscheme $Y=V((f_1, \dots, f_c))$ of codimension $c$ containing $X$, and let $s=c-l+\nu$ and $f_{c+j}=h_{l+j}$ for every $j=1, \dots, s-c$. 
For each $i=1, \dots, s$, we can write
$$f_i=\sum_{j=1}^{m_i}u_{ij}x_1^{a^{(1)}_{ij}} \cdots x_n^{a^{(n)}_{ij}} \in K[x_1, \dots, x_n]  
\quad \left((a^{(1)}_{ij}, \dots, a^{(n)}_{ij}) \in \Z_{\ge 0}^n \setminus \{\mathbf{0}\}, \; u_{ij} \in K^* \right),$$
where $u_{11}, \dots, u_{1m_1}, \dots, u_{s1}, \dots, u_{sm_s}$ are algebraically independent over $\Q$. 
We decompose $Y$ into the scheme-theoretic union of $X$ and a variety $C^Y$, and denote by $D^Y$ the Weil divisor on $X$ obtained by restricting $C^Y$ to $X$. 
Let $g \in K[x_1, \dots, x_n]$ be a polynomial whose image is a local equation of the Cartier divisor $r D^Y$ in a neighborhood of $0$. 
Using the standard decent theory of \cite[Chapter 2]{HH},  we can choose a model 
$$(\mathbb{A}^n_B=\Spec B[x_1, \dots, x_n], X_B, Y_B=V((f_{1, B}, \dots, f_{c, B})), D^Y_B, \ba_B, J_{X, B}, g_B)$$ of $(\mathbb{A}^n_K, X, Y, D^Y, \ba, J_X, g)$ over a finitely generated $\Z$-subalgebra $B$ of $K$ such that 
\renewcommand{\labelenumi}{(\roman{enumi})}
\begin{enumerate}
\item $\Z[u_{11}, \dots, u_{1m_1}, \dots, u_{s1}, \dots, u_{sm_s}, 1/(\prod_{i,j} u_{ij})] \subseteq B$,
\item the image of $g_B$ lies in $J_B$, 
\item $X_{\mu}$ is a normal $\Q$-Gorenstein closed subvariety of codimension $c$ passing the origin $0$ with Gorenstein index $r$,
\item $Y_{\mu}$ is a complete intersection closed subscheme of codimension $c$ containing $X_{\mu}$, 
\item $rD^Y_{\mu}$ is a Cartier divisor on $X_{\mu}$ and $\sO_{X_{\mu}}(rK_{X_{\mu}})=\sO_{X_{\mu}}(-rD^Y_{\mu})\omega_{Y_{\mu}}^{\otimes r}$, 
\item the image of $g_{\mu}$ is a local equation of $r D^Y_{\mu}$ at $0$
\end{enumerate}
for all closed points $\mu \in \Spec B$. 
It is then enough to show that there exists a dense subset of closed points $W \subseteq \Spec B$ such that $\left(X_{\mu}; t V(\ba_{\mu})+\frac{1}{r} V(J_{X, \mu}) \right)$
is sharply $F$-pure at $0$ for all $\mu \in W$. 

Since $(X; t V(\ba)+\frac{1}{r} V(J_X))$ is log canonical at $0$, it follows from \cite[Theorem 1.1]{Ka} and \cite[Theorem 1.1]{EM} (see also Remark \ref{lc rem}) that $(\mathbb{A}^n_K; t  V(\ba)+c X)$ is log canonical at $0$.  
By a general choice of $f_1, \dots, f_c$, it is equivalent to saying that 
$$(\mathbb{A}^n_K; \sum_{i=1}^c \mathrm{div}(f_i)+t  V(f_{c+1}, \dots, f_s))$$ is log canonical at $0$. 
By making use of  the summation formula for multiplier ideals \cite[Theorem 3.2]{Ta}, for any $\epsilon>0$, there exist nonnegative rational numbers $\lambda_{c+1}(\epsilon), \dots, \lambda_s(\epsilon)$ with $\lambda_{c+1}(\epsilon)+\dots+\lambda_s(\epsilon)=t(1-\epsilon)$ such that 
$$(\mathbb{A}^n_K, \sum_{i=1}^c (1-\epsilon)  \mathrm{div}(f_i)+\sum_{j=c+1}^s \lambda_j(\epsilon)  \mathrm{div}(f_j))$$ is klt at $0$. 
Let $\ba_{f_i}$ be the term ideal of $f_i$ (that is, the monomial ideal generated by the terms of $f_i$) for each $i=1, \dots, s$. 
Since $\ba_{f_i}$ contains $f_i$, the monomial ideal $\J(\mathbb{A}^n_K, \sum_{i=1}^c (1-\epsilon) V(\ba_{f_i})+\sum_{j=c+1}^s \lambda_j(\epsilon) V(\ba_{f_j}))$ is trivial. 
Then by a result of Howald \cite[Main Theorem]{Ho1}, the vector $\mathbf{1}$ lies in the interior of 
$$\sum_{i=1}^c  (1-\epsilon) P(\ba_{f_i})+\sum_{j=c+1}^s \lambda_j(\epsilon) P(\ba_{f_j}),$$ where $P(\ba_{f_i})$ is the Newton Polyhedron of $\ba_{f_i}$ for each $i=1, \dots, s$. 
This is equivalent to saying that there exists 
$$\sigma(\epsilon)=(\sigma_{11}{(\epsilon)}, \dots, \sigma_{1m_1}{(\epsilon)}, \dots, \sigma_{s1}{(\epsilon)}, \dots, \sigma_{sm_s}{(\epsilon)}) \in \R_{\ge 0}^{\sum_{i=1}^s m_i}$$ 
such that 
\begin{enumerate}
\item[$(1)$] 
$A  \sigma(\epsilon)^{\mathrm{T}} < \mathbf{1}$,
\item[$(2)$] 
$\sum_{j=1}^{m_i} \sigma_{ij}{(\epsilon)}=1-\epsilon$ for every $i=1, \dots, c$, 
\item[$(3)$] 
$\sum_{j=1}^{m_i} \sigma_{ij}{(\epsilon)}=\lambda_i(\epsilon)$ for every $i=c+1, \dots, s$, 
\end{enumerate}
where $A$ is the $n \times (\sum_{i=1}^{s} m_i)$ matrix 
$$
\left(
\begin{array}{ccccccccc}
a^{(1)}_{11} & \ldots & a^{(1)}_{1m_1} & a^{(1)}_{21} & \ldots & a^{(1)}_{s1} & \ldots & a^{(1)}_{sm_s}\\
\vdots &\ddots & \vdots & \vdots & & \vdots  & \ddots & \vdots\\
a^{(n)}_{11} & \ldots & a^{(n)}_{1m_1} & a^{(n)}_{21} & \ldots  &  a^{(n)}_{s1} & \ldots & a^{(n)}_{sm_s} \\
\end{array}
\right).
$$
Since such $\sigma(\epsilon)$ exists for every $\epsilon>0$,  
by the continuity of real numbers and the convexity of the solution space $\{\mathbf{\tau} \in \R^{\sum_{i=1}^s m_i}_{\ge 0} \big| \widetilde{A} \mathbf{\tau}^{\rm T} \le \mathbf{1}\}$, there exists 
$$\sigma=(\sigma_{11}, \dots, \sigma_{1m_1}, \dots, \sigma_{s1}, \dots, \sigma_{sm_s}) \in \Q_{\ge 0}^{\sum_{i=1}^s m_i}$$ 
such that 
\begin{enumerate}
\item[$(\widetilde{1})$] 
$\widetilde{A} \sigma^{\mathrm{T}} \le \mathbf{1}$,
\item[$(\widetilde{2})$] 
$\sum_{j=1}^{m_i} \sigma_{ij}=1$ for every $i=1, \dots, c$, 
\item[$(\widetilde{3})$] 
$\sum_{i=c+1}^s\sum_{j=1}^{m_i} \sigma_{ij}=t$, 
 \end{enumerate}
where $\widetilde{A}$ is the $(n+s) \times (\sum_{i=1}^{s} m_i)$ matrix 
$$
\left(
\begin{array}{ccccccccccc}
a^{(1)}_{11} & \ldots & a^{(1)}_{1m_1} & a^{(1)}_{21} & \ldots & a^{(1)}_{2m_2} & a^{(1)}_{31} & \ldots & a^{(1)}_{s1} & \ldots & a^{(1)}_{sm_s}\\
\vdots & \ddots & \vdots & \vdots & \ddots & \vdots & \vdots  & & \vdots & \ddots & \vdots\\
a^{(n)}_{11} & \ldots & a^{(n)}_{1m_1} & a^{(n)}_{21} & \ldots & a^{(n)}_{2m_2} &  a^{(n)}_{31} & \ldots & a^{(n)}_{s1} & \ldots & a^{(n)}_{sm_s} \\
1 & \ldots & 1 & 0 & \ldots & 0 & 0 & \ldots & 0 & \ldots & 0 \\
& &  & 1 & \ldots & 1 & 0 & \ldots & 0 & \ldots & 0\\
& &  & & & & 1 & \dots & 0 & \ldots & 0\\
& & & & & & & & \vdots & & \vdots \\
& \bigzerol & & & & & & & 0 & \ldots & 0 \\
& &   &  &  &  &  & & 1 & \ldots & 1\\
\end{array}
\right).
$$
We take the least common multiple $N$ of the denominators of the $\sigma_{ij}$, so that $\sigma_{ij}(p-1)$ is an integer for all $i=1, \dots, s$ and all $j=1, \dots,  m_i$ whenever $p \equiv 1 \textup{ mod } N$. 

Let $p$ be a prime such that $p \equiv 1 \textup{ mod } Nr$ and let $e_1, \dots, e_n$ be nonnegative integers such that 

$$
(p-1)\widetilde{A}  \sigma^{\rm T}
=
\left(
\begin{array}{c}
e_1\\
\vdots \\
e_n\\
\sum_{j=1}^{m_1}\sigma_{1j}(p-1)\\
\vdots\\
\sum_{j=1}^{m_s}\sigma_{sj}(p-1)\\
\end{array}
\right).
$$
Then $e_k \le p-1$ for all $k=1, \dots, n$. 
The coefficient of the monomial $x_1^{e_1} \cdots x_n^{e_n}$ in the expansion of $f_{1}^{\sum_{j=1}^{m_1}\sigma_{1j}(p-1)} \cdots f_s^{\sum_{j=1}^{m_s}\sigma_{sj}(p-1)}$ is
$$ \theta_{\mathbf{\sigma}, p}(\mathbf{u}):=\sum_{\tau_{ij}} \prod_{i=1}^s  \binom{\sum_{j=1}^{m_i}\sigma_{ij}(p-1)}{\tau_{i1}, \dots, \tau_{im_i}} 
u_{i1}^{\tau_{i1}} \cdots u_{im_i}^{\tau_{im_i}} \in \Z[u_{ij}]_{\hspace*{-0.5em}i=1, \dots, s \atop j=1, \dots, m_i} \subseteq B,$$
where the summation runs over all $\tau=(\tau_{11}, \dots, \tau_{1m_1}, \dots, \tau_{s1}, \dots, \tau_{sm_s}) \in \Z_{\geq 0}^{\sum_{i=1}^s m_i}$ such that 

\noindent
$$
\widetilde{A}  \tau^{\rm T}
=
\left(
\begin{array}{c}
e_1\\
\vdots \\
e_n\\
\sum_{j=1}^{m_1}\sigma_{1j}(p-1)\\
\vdots\\
\sum_{j=1}^{m_s}\sigma_{sj}(p-1)\\
\end{array}
\right).
$$
Since $\widetilde{A}  \sigma^{\mathrm{T}} \le \mathbf{1}$, one has $\sum_{j=1}^{m_i}\sigma_{ij}(p-1) \le p-1$ for all $i=1, \dots, s$, 
so the coefficient   
$$\prod_{i=1}^s \binom{\sum_{j=1}^{m_i}\sigma_{ij}(p-1)}{\sigma_{i1}(p-1), \dots, \sigma_{im_i}(p-1)}$$ 
of the monomial $\prod_{i=1}^s u_{i1}^{\sigma_{i1}(p-1)} \cdots u_{im_i}^{\sigma_{im_i}(p-1)}$ in $\theta_{\mathrm{\sigma},p}(\mathbf{u})$ is nonzero in $\F_p$. 
This means that $\theta_{\mathrm{\sigma},p}(\mathbf{u})$ is nonzero in $\F_p[u_{ij}]_{\hspace*{-0.5em}i=1, \dots, s \atop j=1, \dots, m_i} \subseteq B/pB$, because, by assumption, the $u_{ij}$ are algebraically independent over $\F_p$. 
Thus, $D(\theta_{\mathrm{\sigma},p}(\mathbf{u})) \cap \Spec B/pB$ is a dense open subset of $\Spec B/pB$. 

We now set
$$W:=\bigcup_{p \equiv 1 \textup{ mod } Nr} D(\theta_{\mathrm{\sigma},p}(\mathbf{u})) \cap \Spec B/pB \subseteq \Spec B.$$
Then $W$ is a dense subset of $\Spec B$. 
Fix any closed point $\mu \in W$ and let $p$ denote the characteristic of the residue field $\kappa(\mu)=B/\mu$ from now on. 
Since the image of $\theta_{\mathrm{\sigma},p}(\mathbf{u})$ is nonzero in $B/\mu$, the monomial $x_1^{e_1} \cdots x_n^{e_n}$ appears in the expansion of $f_{1, \mu}^{(\sum_{j=1}^{m_1}\sigma_{1j})(p-1)} \cdots f_{s, \mu}^{(\sum_{j=1}^{m_s}\sigma_{sj})(p-1)}$ in $(B/\mu)[x_1, \dots, x_n]$. 
Since $e_k \leq p-1$ for all $k=1, \dots, n$ and $\sum_{j=1}^{m_i}\sigma_{ij}(p-1)=p-1$ for all $i=1, \dots, c$, one has 
$$f_{1, \mu}^{p-1} \cdots f_{c, \mu}^{p-1}f_{c+1, \mu}^{(\sum_{j=1}^{m_{c+1}}\sigma_{c+1j})(p-1)} \cdots f_{s, \mu}^{(\sum_{j=1}^{m_s}\sigma_{sj})(p-1)} \notin (x_1^p, \dots, x_n^p)$$
in $(B/\mu)[x_1, \dots, x_n]_{(x_1, \dots, x_n)}$. 
By Lemma \ref{Fedder} (2), this is equivalent to saying that for all powers $q=p^e$ of $p$, 
$$f_{1, \mu}^{q-1} \cdots f_{c, \mu}^{q-1}f_{c+1, \mu}^{(\sum_{j=1}^{m_{c+1}}\sigma_{c+1j})(q-1)} \cdots f_{s, \mu}^{(\sum_{j=1}^{m_s}\sigma_{sj})(q-1)} \notin (x_1^q, \dots, x_n^q)$$
in $(B/\mu)[x_1, \dots, x_n]_{(x_1, \dots, x_n)}$. 
Applying the claim in the proof of Theorem \ref{restriction}, one has 
$$(\mathcal{I}_{X, \mu}^{[q]}:\mathcal{I}_{X, \mu})g_{\mu}^{(q-1)/r}f_{c+1, \mu}^{(\sum_{j=1}^{m_{c+1}}\sigma_{c+1j})(q-1)} \cdots f_{s, \mu}^{(\sum_{j=1}^{m_s}\sigma_{sj})(q-1)} \notin (x_1^q, \dots, x_n^q)$$
in $(B/\mu)[x_1, \dots, x_n]_{(x_1, \dots, x_n)}$. 
Since $\sum_{i=c+1}^s\sum_{j=1}^{m_i}\sigma_{ij}=t$ and the image of $g_{\mu}$ lies in $J_{X, \mu}$, it follows from Lemma \ref{Fedder} (2) again that the pair $\left(X_{\mu}; \frac{1}{r} V(J_{X, \mu})+ t  V(\ba_{\mu}) \right)$ is sharply $F$-pure at $0$. 
\end{proof}

\begin{rem}\label{computation}
Using the same arguments as the proof of Theorem \ref{very general}, we can prove the following: 
let $X=\Spec K[x_1, \dots, x_n]/(f_1, \dots, f_c)$ be a normal complete intersection over a field $K$ of characteristic zero passing through the origin $0$. 
Let $Z \subset X$ be a proper closed subscheme passing through $0$ and $f_{c+1}, \dots, f_s$ be a system of polynomials whose image generates the defining ideal $\mathcal{I}_Z \subseteq \sO_X$ of $Z$. 
We write  
$$f_i=\sum_{j=1}^{m_i}u_{ij}x_1^{a^{(1)}_{ij}} \cdots x_n^{a^{(n)}_{ij}} \in K[x_1, \dots, x_n]  
\quad \left((a^{(1)}_{ij}, \dots, a^{(n)}_{ij}) \in \Z_{\ge 0}^n \setminus \{\mathbf{0}\}, \; u_{ij} \in K^* \right)$$
for each $i=1, \dots, s$, and set $A$ to be the $(n+s) \times (\sum_{i=1}^{s} m_i)$ matrix  
$$
\left(
\begin{array}{ccccccccccc}
a^{(1)}_{11} & \ldots & a^{(1)}_{1m_1} & a^{(1)}_{21} & \ldots & a^{(1)}_{2m_2} & a^{(1)}_{31} & \ldots & a^{(1)}_{s1} & \ldots & a^{(1)}_{sm_s}\\
\vdots & \ddots & \vdots & \vdots & \ddots & \vdots & \vdots  & & \vdots & \ddots & \vdots\\
a^{(n)}_{11} & \ldots & a^{(n)}_{1m_1} & a^{(n)}_{21} & \ldots & a^{(n)}_{2m_2} &  a^{(n)}_{31} & \ldots & a^{(n)}_{s1} & \ldots & a^{(n)}_{sm_s} \\
1 & \ldots & 1 & 0 & \ldots & 0 & 0 & \ldots & 0 & \ldots & 0 \\
& &  & 1 & \ldots & 1 & 0 & \ldots & 0 & \ldots & 0\\
& &  & & & & 1 & \dots & 0 & \ldots & 0\\
& & & & & & & & \vdots & & \vdots \\
& \bigzerol & & & & & & & 0 & \ldots & 0 \\
& &   &  &  &  &  & & 1 & \ldots & 1\\
\end{array}
\right).
$$
Then we consider the following linear programming problem:
\begin{align*}
&\textup{Maximize: } \sum_{i={c+1}}^s\sum_{j=1}^{m_i} \sigma_{ij} \\
&\textup{Subject to: } 
A  (\sigma_{11}, \dots, \sigma_{1m_1}, \dots, \sigma_{s1}, \dots, \sigma_{sm_s})^{\mathrm{T}} \le \mathbf{1},\\
&\hspace*{5.1em}\sum_{i=1}^c\sum_{j=1}^{m_i} \sigma_{ij}=c,\\
&\hspace*{5.1em}\sigma_{ij} \in \Q_{\ge 0} \textup{ for all $i=1, \dots, s$ and all $j=1, \dots, m_i$}.
\end{align*}
Assume that there exists an optimal solution $\mathbf{\sigma}=(\sigma_{11}, \dots, \sigma_{1m_1}, \dots, \sigma_{s1}, \dots, \sigma_{sm_s})$ such that $A  \mathbf{\sigma}^{\mathrm{T}} \neq A  {\mathbf{\sigma}'}^{\mathrm{T}}$ for all other optimal solutions ${\mathbf{\sigma}'} \neq \mathbf{\sigma}$.
In addition, we assume that $X$ is log canonical at $0$. 
Then the following hold:
\begin{enumerate}
\item $\lct_0(X, Z)$ is equal to the optimal value $\sum_{i=c+1}^s\sum_{j=1}^{m_i} \sigma_{ij}$. 
\item Given any model of $(X, Z)$ over a finitely generated $\Z$-subalgebra $B$ of $K$, 
there exists a dense subset of closed points $W \subseteq \Spec B$ such that 
$$\mathrm{lct}_0\left(X; Z \right)=\mathrm{fpt}_{0}\left(X_{\mu}; Z_{\mu} \right)$$
for all $\mu \in W$. 
\end{enumerate}
\end{rem}

In \cite{ST}, Shibuta and the author showed that the assumption of Remark \ref{computation} is satisfied if $X=\mathbb{A}^n_K$ and $Z$ is a complete intersection binomial subscheme or a space monomial curve (in the latter case, $n=3$). However, in general, there exists a binomial subscheme that does not satisfy the assumption.

\begin{eg}
Let $X=\mathbb{A}^6_K=\Spec K[x_1, x_2, x_3, y_1, y_2, y_3]$ be the affine $6$-space over a field $K$ of characteristic zero and $Z \subseteq X$ be the closed subscheme defined by the binomials $x_1y_2-x_2y_1, x_2y_3-x_3y_2$ and $x_1y_3-x_3y_1$.  
Then $Z$ does not satisfy the assumption of Remark \ref{computation}. 
Indeed, $\lct_0(X, Z)=2$ but the optimal value of the linear programming problem in Remark \ref{computation} is equal to $3$.  
Given a prime number $p$, let $X_p=\mathbb{A}^6_{\F_p}=\Spec \F_p[x_1, x_2, x_3, y_1, y_2, y_3]$ and $Z_p \subseteq X_p$ be the reduction modulo $p$ of $Z$.
Since $\fpt_0(X_p, Z_p)=2$ for all primes $p$, Conjecture \ref{lc conj} holds for this example.  
\end{eg}

\end{document}